
\documentclass[a4paper, 12 pt, ]{article}  





\usepackage{graphics} 
\usepackage{epsfig} 
\usepackage{graphicx}
\usepackage{mathtools}
\usepackage{times}
\usepackage{epstopdf}
\usepackage{csquotes}
\usepackage{color}
\usepackage{amssymb}
\usepackage{amsthm}
\usepackage{float}
\usepackage{pgfplots}
\pgfplotsset{compat=1.5}
\usepackage{subfig}
\usepackage[colorlinks,allcolors=blue]{hyperref}
\usepackage{cite}
\newtheorem{theorem}{Theorem}[section]

\newtheorem{proposition}[theorem]{Proposition}
\newtheorem{remark}{Remark}
\usepackage{color}

\usepackage{kbordermatrix}
\def\be{\begin{equation}}
\def\ee{\end{equation}}
\def\bea{\begin{eqnarray}}
\def\eea{\end{eqnarray}}

\newcommand{\ncom}{\newcommand}
\ncom{\beqn}{\begin{eqnarray*}} \ncom{\eeqn}{\end{eqnarray*}}
\ncom{\beq}{\begin{eqnarray}} \ncom{\eeq}{\end{eqnarray}}
\newcommand{\kc}[1]{\textcolor{black}{#1}}

\newcommand{\kcb}[1]{\textcolor{black}{#1}}
\makeatletter
\title{\Large \bf  Stability Analysis of Constrained Optimization Dynamics via Passivity Techniques}
\author{ K. C. Kosaraju$^1$, V. Chinde$^{2}$, R. Pasumarthy$^1$, A. Kelkar$^{2}$ and  N. M. Singh$^{3}$%
	\thanks{ $^1$Department
		of Electrical Engineering at IIT-Madras, Chennai, India {\tt\small ee13d015, ramkrishna@ee.iitm.ac.in}.}
	\thanks{$^2$Department of Mechanical Engineering at Iowa State University, Ames, USA {\tt\small vchinde@iastate.edu, akelkar@iastate.edu}.}
	\thanks{$^3$Department of Electrical Engineering at VJTI, Mumbai, India {\tt \small nmsingh59@gmail.com}.
	}%
}


\begin{document}

	\maketitle
	\thispagestyle{empty}
	\pagestyle{empty}
	
\begin{abstract}
	In this paper, we present passivity based convergence analysis of continuous time primal-dual gradient method for convex optimization problems. We first show that a convex optimization problem with only affine equality constraints admit a Brayton Moser formulation. This observation leads to a new passivity property derived from a Krasovskii type storage function. Secondly, the inequality constraints are modeled as a state dependent switching system. Using hybrid methods, it is shown that each switching mode is passive and the passivity of the system is preserved under arbitrary switching. Finally, the two systems, (i) one derived from the Brayton Moser formulation and (ii) the state dependent switching system, are interconnected in a power conserving way. The resulting trajectories of the overall system are shown to converge asymptotically, to the optimal solution of the convex  optimization problem. The proposed methodology is applied to an energy management problem in buildings and  simulations are provided for corroboration. 
	 
	\end{abstract}
	\section{Introduction}
	    The applications of convex optimization are ubiquitous in various fields of research \cite{ben2001lectures} such as, resource allocation \cite{ibaraki1988resource}, utility maximization \cite{kelly1998rate} etc. Numerous methods are proposed to solve these optimization problems \cite{boyd2004convex}. Solution techniques in a distributed setting have gained importance in recent times\cite{xiao2006optimal}.  One  of  the standard  tools  for  designing  algorithms  to  solve  such  optimization  problems  is  through  primal-dual  gradient  method \cite{kose1956solutions,arrow1958studies}. Gradient based methods are a well known class of mathematical routines for solving convex optimization problems. From a control and dynamics perspective, these primal dual algorithms have much to offer in terms of using tools from system theory to have better understanding of the underlying dynamics. Passivity, a system theoretic tool has been widely used for studying dynamical systems as it relates to  energy conservation and used for attaining classical control objectives such as stabilization and performance. 
	    
	    The convergence of gradient based methods and Lyapunov stability relate the solution of the optimization problem to the equilibrium point of a dynamical system. The Krasovskii-Lyapunov function is  particularly suited for establishing stability of the continuous time gradient laws, as the equilibrium point (or solution of the optimization problem) is not known apriori.	In \cite{feijer2010stability}, the authors used this Krasovskii Lyapunov function and hybrid Lasalle's invariance principle \cite{lygeros2003dynamical} to prove asymptotic stability of a network optimization problem. The gradient structure of the primal-dual equations characterizing the optima of a  convex optimization with only equality constraint admit a Brayton Moser (BM) form. Further, using the duality between energy and co-energy the BM form is partially transformed  into a port-Hamiltonian (pH) form \cite{jeltsema2009multidomain,stegink2016unifying}. These transformations pave the way for passivity/stability analysis using (i) the invariance principle for discontinuous Caratheodory systems \cite{cherukuri2016asymptotic} and (ii) an incremental passivity property for the misfit dynamics. \textcolor{black}{In \cite{simpson2016input}, the authors provided robustness analysis for  primal-dual dynamics of convex optimization problem with only equality constraint. A brief list of applications of primal dual gradient methods is given below: in the context of power systems, these methods have been used to achieve optimal load sharing \cite{yi2015distributed}, investigating effects of real-time pricing on stability and volatility of electricity markets \cite{roozbehani2012volatility}, and stability analysis of integrated power markets with physical dynamics\cite{stegink2016unifying}. In \cite{ma2016energy}, primal-dual gradient method is used to solve the energy management problem in application to HVAC systems.}
	    
		Increasing energy demand, with supply constraints brings consumers and producers to behave in a manner of maximizing social welfare.	
	 In the context of building energy management system, the buildings (consumers) try to optimize their set-point to maximize comfort, whereas utilities try to reduce their generation costs.
	Traditionally, conventional generators were employed to meet the additional demand. With increasing demand side management programs, utilities provide incentives to consumers to lower the overall power demand. This results in reducing load during the time when prices are high. These programs lead to a process which involve both the supply and the demand-side resources to minimize the overall cost.
		\textcolor{black}{Building systems being  one of the strong contenders for providing ancillary services to the grid \cite{hao2013ancillary} 
		in which  heating ventilating and airconditioning (HVAC) systems play a significant role, as they account for approximately 30-40\% of the total energy demand.} 
		There is a vast amount of literature on demand response (DR) strategies in maintaining optimal balance between supply and demand at all times. An overview on the types of DR	and taxonomy for demand side management is described in \cite{palensky2011demand}. Real-time pricing based demand response (DR) application \cite{mathieu2013state} has been deployed in smart meters of residential homes to have direct control of loads such as air conditioners etc. Recently the authors \cite{chinde2016building} have proposed the Brayton - Moser (BM) formulation for modeling and stability analysis of HVAC subsystems.	
	%
	\subsection*{Motivations and Main contributions}
In any stabilization problem, whether it is to stabilize a system to an equilibrium point or to an operating point, the velocities must converge asymptotically to zero. This observation motivates the need for storage functions  defined explicitly in velocities. A good candidate, in general, is a positive definite quadratic function of velocities. In \cite{ICCvdotidot}, the authors showed that for systems specified in BM form a new passivity property can be derived, with differentiation at both the port variables. In this paper, we employ a similar methodology to derive passive maps directly from the BM form of a convex optimization problem with only equality constraints. The primal-dual dynamics of the inequality constraint is modelled as a state dependent switching system. We first show that each switching mode is passive and the passivity of the system is preserved under arbitrary switching using hybrid passivity tools, a methodology similar to switched Lyapunov functions for stability analysis of switch system. Finally, the two systems, (i) one derived from the Brayton Moser formulation and (ii) the state dependent switching system, are interconnected in a manner such that the equilibrium is the solution of the convex  optimization problem.
	As a case study, we apply primal-dual methodology to a social welfare problem associated with building energy management system. From an application stand point, BM framework presents a design methodology for stabilization \cite{chinde2016building} of HVAC subsystems. This motivates us to analyze the social-welfare problem, in the context of building systems, formulated as a trade-off between user comfort and generation costs. 
	
	An elaborate version of this manuscript can be found at \cite{css_arxiv}.
	\section{Preliminaries}
	\subsection*{Brayton-Moser formulation}
	\textcolor{black}{
 Consider the standard representation of a dynamical system in Brayton-Moser (BM) formulation
\begin{equation}
Q(x)\dot{x}=\nabla_x P(x)+G(x)u\label{BM}
\end{equation}
the system state vector $x \in \mathbb{R}^{n}$ and the input vector $u \in \mathbb{R}^{m}$ ($m \leq n$). $P(x): \mathbb{R}^{n} \rightarrow \mathbb{R}$ is a scalar function of the state, which has the units of power\kc{,} also referred to as mixed potential function 
\cite{jeltsema2009multidomain}\kc{.} $Q(x): \mathbb{R}^{n} \rightarrow \mathbb{R}^{n\times n}$ and $G(x): \mathbb{R}^{n} \rightarrow \mathbb{R}^{n\times m}$. 
In BM formulation we represent the system dynamics in pseudo-gradient form, ($Q(x)$ and $P(x)$ are indefinite). Therefore $P(x)$ can not be used as a Lyapunov function for stability analysis. 
A way of constructing a suitable Lyapunov function involves finding $\alpha\in \mathbb{R} $ and $M\in \mathbb{R}^{n\times n}$ \cite{ortega2003power,ICCvdotidot} such that 
\begin{eqnarray}\label{common_storage_fun}
\tilde{P}=\alpha P+\frac{1}{2}\nabla_xP^\top M \nabla_xP
\end{eqnarray}}

	\section{Passivity based formulation of the optimization problem}
	Consider the following constrained optimization problem 
	\begin{equation}\label{SOP}
	\begin{aligned}
	& \underset{x \in \mathbb{R}^{n}}{\text{minimize}}
	& & f(x)\\
	& \text{subject to}
	& & h_{i}(x)=0 \hspace{0.4cm} i = 1,\hdots , m
	\end{aligned}
	\end{equation}
	where $f: \mathbb{R}^{n} \rightarrow \mathbb{R}$ is continuously differentiable $(C^1)$ and strictly convex and $h_{i} (\in C^1): \mathbb{R}^{n} \rightarrow \mathbb{R}$ is affine. Assume 
	\begin{itemize}
	    \item[(i)]that the objective function has a positive definite Hessian $\nabla_{x}^{2}f(x)$
	    \item[(ii)]that the problem (\ref{SOP}) has a finite optimum, and \kcb{Slater's condition is satisfied (i.e., the constraints are feasible) and strong duality holds} \cite{boyd2004convex}.
	\end{itemize}
	The solution $x^{*}$ is an optimal solution to \eqref{SOP}
	if there exists $\lambda^{*} \in \mathbb{R}^{m}$ such that the following  Karush-Kuhn-Tucker (KKT) conditions are satisfied.
	\begin{eqnarray}\label{mainKKT}
	\begin{aligned}
	\nabla_{x} f(x^{*})+\sum_{i=1}^{m}\lambda_{i}\nabla_{x} h_{i}(x^{*})=0\\
	h_{i}(x^{*})=0 \;\;\;\forall i \in \{1,\hdots, m\}
	\end{aligned}
	\end{eqnarray}
	The Lagrangian of \eqref{SOP} is given by
	\begin{equation}\label{MainLag}
	\mathcal{L}=f(x)+\sum_{i=1}^{m}\lambda_{i} h_{i}(x)
	\end{equation}
	Since strong duality holds for \eqref{SOP},
	$(x^{*},\lambda^{*})$ is a saddle point of the Lagrangian $\mathcal{L}$ if and only if $x^{*}$ is an optimal solution to \eqref{SOP} and $\lambda^{*}$ is optimal solution
	to its dual problem. Consider the following dynamics
	\begin{equation}\label{maindyn}
	\begin{split}
	-\tau_{x}\dot{x}&=\nabla_{x} f(x)+\sum_{i=1}^{m}\lambda_{i}\nabla_{x} h_{i}(x)+u\\
	\tau_{\lambda_{i}}\dot{\lambda}_{i}&= h_{i}(x),\;\;
	y =-x. 
	\end{split}
	\end{equation}
	where $\tau_{x}, \tau_{\lambda}$ are positive definite matrices and input $u, y \in \mathbb{R}^n$. The unforced system ($u=0$) of equations \eqref{maindyn} represent primal-dual dynamics corresponding to \eqref{MainLag} and the equilibrium corresponds to the KKT conditions \eqref{mainKKT}. 
	
	\subsection{ The Brayton Moser formulation:} 
Denote $z= \left[x;\lambda\right]$. The continuous time gradient laws \eqref{maindyn}, associated with \eqref{SOP}, naturally admit a Brayton-Moser (BM) formulation
	\begin{equation}
	Q(z)\dot{z}=\nabla_z P(z)+u\label{BM}
	\end{equation}
	with $Q(z)=\text{diag}\{-\tau_{x},\tau_{\lambda}\}$ and $P(z)=f(x)+\lambda^{\top} h(x)$ is a scalar function of the state, which has the units of power, also referred to as mixed potential function \cite{jeltsema2009multidomain}.
	\begin{proposition}\label{prop::eq_const}
		Let $\bar{z}=(\bar{x}, \bar{\lambda})$ satisfy \eqref{mainKKT}. Assume $h(x)$ is  convex and  $f(x)$ strictly convex. Then the system of equations \eqref{maindyn} are passive with port variables $(\dot{u},\dot{y})$ \cite{ICCvdotidot}.  Further every solution of the unforced version ($u=0$) of  \eqref{maindyn} asymptotically converges to $\bar{z}$.
	\end{proposition}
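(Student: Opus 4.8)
\noindent The plan is to read \eqref{maindyn} as the Brayton--Moser system \eqref{BM} with $z=(x;\lambda)$, the \emph{constant} indefinite matrix $Q=\mathrm{diag}\{-\tau_x,\tau_\lambda\}$ and mixed potential $P(z)=f(x)+\lambda^\top h(x)$, and then invoke the velocity-port passivity construction of \cite{ICCvdotidot}. Since every $h_i$ is affine, the Hessian of $P$ is
\[
\nabla_z^2 P(z)=\begin{bmatrix}\nabla_x^2 f(x) & \nabla_x h(x)^\top\\ \nabla_x h(x) & 0\end{bmatrix},
\]
with no $\lambda$-dependent curvature, and because $Q$ is constant I may differentiate \eqref{BM} in time to get the prolonged dynamics $Q\ddot z=\nabla_z^2 P(z)\,\dot z+\dot{\bar u}$, $\bar u=(u;0)$. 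I would then take the Krasovskii-type storage
\[
S(\dot z)=\tfrac12\,\dot z^\top\mathrm{diag}\{\tau_x,\tau_\lambda\}\,\dot z\;>\;0,
\]
which coincides, along the flow, with \eqref{common_storage_fun} for $\alpha=0$ and $M=\mathrm{diag}\{\tau_x^{-1},\tau_\lambda^{-1}\}$. Differentiating $S$ along the prolonged dynamics and using $\mathrm{diag}\{\tau_x,\tau_\lambda\}Q^{-1}=\mathrm{diag}\{-I,I\}$, the symmetry of $\nabla_x^2 f$, the identity $\dot x^\top\nabla_x h^\top\dot\lambda=\dot\lambda^\top\nabla_x h\,\dot x$ (which cancels the primal--dual cross terms), and $y=-x$, one should land on
\[
\dot S=-\,\dot x^\top\nabla_x^2 f(x)\,\dot x+\dot u^\top\dot y\;\le\;\dot u^\top\dot y ,
\]
i.e. \eqref{maindyn} is passive from $\dot u$ to $\dot y$ with storage $S$, and output-strict in the primal velocity since $\nabla_x^2 f(x)\succ0$.

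For the convergence statement I set $u=0$. First I would establish boundedness of trajectories --- $S$ will not deliver this, being blind to $z$ --- via the incremental function $V(z)=\tfrac12(z-\bar z)^\top\mathrm{diag}\{\tau_x,\tau_\lambda\}(z-\bar z)$. Substituting \eqref{maindyn} with $u=0$, using that $\bar z$ satisfies \eqref{mainKKT} and that $h$ affine gives $h(x)=\nabla_x h\,(x-\bar x)$, the dual and coupling terms telescope and
\[
\dot V=-\,(x-\bar x)^\top\big(\nabla_x f(x)-\nabla_x f(\bar x)\big)\;\le\;0 ,
\]
by convexity of $f$, with equality only at $x=\bar x$ by strict convexity; hence the sublevel sets of $V$ are compact, positively invariant, and the solution is bounded.

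With that in hand I would apply LaSalle's invariance principle to the autonomous unforced system, viewing $S$ as a function of $z$ through $\dot z=Q^{-1}\nabla_z P(z)$: every solution approaches the largest invariant set in $\{\dot S=0\}=\{\dot x=0\}$. On that set $x$ is constant, so \eqref{maindyn} forces $\nabla_x f(x)+\nabla_x h^\top\lambda$ constant and $\tau_\lambda\dot\lambda$ constant; boundedness of $\lambda$ then rules out a linear drift, giving $\dot\lambda\equiv0$, $h(x)=0$ and $\nabla_x f(x)+\nabla_x h^\top\lambda=0$ --- the invariant set is exactly the KKT set \eqref{mainKKT}, whose $x$-component is the unique minimiser of \eqref{SOP}. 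To pass from ``approaches the invariant set'' to convergence to a point I would use the standard $\omega$-limit argument: pick a convergent subsequence $z(t_k)\to z^\infty$ (a KKT point), re-centre $V$ at $\bar z=z^\infty$; $V$ is non-increasing along the flow and $V(z(t_k))\to0$, so $V(z(t))\to0$ and $z(t)\to z^\infty$, which equals $\bar z$ whenever the KKT pair is unique.

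The step I expect to need the most care is the $\dot S$ identity: the exact cancellation of the primal--dual cross terms --- what lets a purely velocity-weighted storage work --- hinges on the symmetry of $\nabla_x^2 f$ and, crucially, on the constraints being affine, which removes the term $\sum_i\lambda_i\nabla_x^2 h_i(x)\,\dot x$ that would otherwise appear upon differentiating \eqref{BM}; for nonlinear (merely convex) $h_i$ this route collapses and a different storage would be required. The second point that needs attention is the boundedness/LaSalle interplay, which is why the auxiliary $V$ and the subsequence argument are both brought in even though the ``energy'' object of the analysis is $S$.
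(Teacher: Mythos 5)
Your passivity argument is exactly the paper's: the same Krasovskii-type storage $\tilde P=\tfrac12\dot x^{\top}\tau_x\dot x+\tfrac12\dot\lambda^{\top}\tau_\lambda\dot\lambda$ obtained from \eqref{common_storage_fun} with $\alpha=0$, the same time-differentiation of \eqref{maindyn} using affineness of $h$ (so no $\sum_i\lambda_i\nabla_x^2h_i$ term survives) and symmetry-induced cancellation of the cross terms, landing on $\dot{\tilde P}=-\dot x^{\top}\nabla_x^2 f(x)\dot x+\dot u^{\top}\dot y\le\dot u^{\top}\dot y$. Where you diverge is the convergence part: the paper disposes of it in one line ($u=0$, $\dot{\tilde P}=0\Rightarrow\dot x=0$, then the first equation of \eqref{maindyn} forces $\lambda$ constant), implicitly relying on an invariance argument and never addressing boundedness of trajectories, whereas you supply the missing ingredients explicitly --- the incremental function $V(z)=\tfrac12(z-\bar z)^{\top}\mathrm{diag}\{\tau_x,\tau_\lambda\}(z-\bar z)$ with $\dot V=-(x-\bar x)^{\top}(\nabla_xf(x)-\nabla_xf(\bar x))\le0$ to get compact invariant sublevel sets, LaSalle on $\{\dot x=0\}$ with the drift-exclusion step $\dot\lambda\equiv0$, and the $\omega$-limit/re-centering step to pass from convergence to the KKT set to convergence to a single point. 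This buys rigor the paper lacks (its one-liner is not a complete proof of asymptotic convergence) and also makes visible the honest caveat that the limit equals the prescribed $\bar z$ only when the KKT pair is unique, e.g.\ when the constraint Jacobian has full row rank; the paper's statement silently assumes this. Both routes hinge on the same structural fact you flag at the end: for merely convex, non-affine $h$ the sign-indefinite multiplier $\lambda$ would leave an uncontrolled $-\dot x^{\top}\bigl(\sum_i\lambda_i\nabla_x^2h_i\bigr)\dot x$ term, so the velocity-quadratic storage works only because the equality constraints are affine.
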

	The proof of this and other propositions are given in Appendix at the end of this paper.\\ 
	%
	%
	\subsection{Inequality constraints}\label{ICSS}
	We now define the inequality constraint $g_i(\tilde{u})\leq 0$  as the following hybrid dynamics
	\begin{equation}\label{IED}
	\tau_{\mu}\dot \mu_i=(g_i(\tilde u))^+_{\mu_i}
	\end{equation}
	where  $\tilde{u}\in \mathbb{R}^n$ and $i\in \{1\cdots p\}$. The positive projection of $g_i(\tilde{u})$ can be written as 
	\beqn
	(g_i(\tilde u))_{\mu_i}^+&=&\left\{\begin{matrix}
		g_i(\tilde u) &\mu_i>0\\\max\{0,g_i(\tilde{u})\} & \mu_i=0
	\end{matrix} \right. 
	\eeqn
	Note that the discontinuity in the above equations occurs when $g_i(\tilde{u})<0$ and $\mu_i=0$, the value of $g_i(\tilde{u})^+$ switches from $g_i(\tilde{u})$ to $0$. To make this more visible, we redefine these equations equivalently as follows; 
		\beq 
	(g_i(\tilde u))_{\mu_i}^+&=&\left\{\begin{matrix}
		g_i(\tilde u) &\; (\mu_i>0 \;\;\text{or}\;\; g_i(\tilde{u})>0)\\0 &\;\text{otherwise}
	\end{matrix} \right. 
	\eeq
	%
	The projection is said to be active in the second case. Let $\mathcal{P}$ represent the power set of $\{1\cdots p\}$, then we define the function $\sigma:[0,\;\infty)\rightarrow \mathcal{P}$ as follows
	\beq\label{sigma_map} \sigma(t)=\{i \mid \text{ if}\; \mu_i(t)=0\;\;\text{and }\;g_i(\tilde{u})\leq0\,\,\, \forall i \in \{1, ..., p\}\}\eeq
	where the projection is active. With $\sigma(t)$ representing the switching signal, equation \eqref{IED} now takes the form of a switched system 
	\beq \label{active_const_def}
	\tau_{\mu}\dot \mu_i=g_i(\tilde u,\sigma)&=&\left\{\begin{matrix}
		g_i(\tilde u) ;&\; i\notin \sigma(t)\\0 ;&\;i\in \sigma(t)
	\end{matrix} \right. 
	\eeq
	The overall dynamics of the $p$ inequality constraints $g_i(\tilde{u})\leq 0$ $\forall i\in \{1\cdots p\}$ can be written in a compact form as:
	\begin{equation}\label{IEdyn}
	\tau_{\mu}\dot{\mu}=g(\tilde u,\sigma)
	\end{equation}
	where $\mu_i$ and $g_i(\tilde{u},\sigma)$ are $i^{th}$ components of  $\mu$ and $g(\tilde{u},\sigma)$ respectively.
	A strictly passive system can be proven to be asymptotically stable, with Lyapunov function as the storage function. But in the case of switching systems its misleading. 
	It is well known that a sufficient condition for a switched system to be passive system is that the storage function should be common for all the individual subsystems \cite{zhao2006notion}. In general it is not easy to find such storage functions. Here we use passivity property defined with `multiple storage functions'\cite{HybridPassive}. Consider the following storage function(s) 

\beq\label{storage_fun_ineq_const}
	S_{\sigma_q}(\mu)&=&\dfrac{1}{2}\sum_{i\notin\sigma_q}^{}\dot \mu_i^2\tau_{\mu_i}\;\;\; \forall \sigma_q \in \mathcal{P}
	\eeq
	\begin{proposition} \label{prop:ineq_passivity}
		The switched system \eqref{IEdyn} is  passive with multiple storage functions $S_{\sigma_q}$ (defined one for each switching state $\sigma_q\in\mathcal{P}$ ), input port $u_s=\dot{\tilde{u}}$ and  output port $y_s=\dot{\tilde{y}}$ where $\tilde{y}=\sum_{\forall i} \mu_i\nabla_{\tilde u}g_i(\tilde{u})$. That is, for each $\sigma_p\in \mathcal{P}$ with the property that for every pair of switching times $(t_i,t_j)$, $i<j$ such that $\sigma(t_i)=\sigma(t_j)=\sigma_p\in \mathcal{P}$ and $\sigma(t_k)\neq \sigma_p$ for $t_i<t_k<t_j$, we have
		\begin{eqnarray}\label{Hybrid_passivity}
		S_{\sigma_p}(\mu(t_j))-S_{\sigma_p}(\mu(t_i))\leq \int_{t_i}^{t_j}u_s^\top y_sdt
		\end{eqnarray}
	\end{proposition}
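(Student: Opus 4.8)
\emph{Proof plan.} I would reduce the multi–storage-function statement \eqref{Hybrid_passivity} to a single ``active-energy'' dissipation inequality. Set $E(t):=\tfrac12\sum_{i=1}^{p}\tau_{\mu_i}\dot\mu_i(t)^2$. Since \eqref{active_const_def} forces $\mu_i\equiv 0$ and $\dot\mu_i\equiv 0$ whenever $i\in\sigma(t)$, one has $E(t)=S_{\sigma(t)}(\mu(t))$ at every instant, and $S_{\sigma_q}(\mu(t))\le E(t)$ for every $q$. So it suffices to show that (a) $\dot E\le u_s^\top y_s$ on the interior of each mode and (b) $E$ never jumps up at a switching time. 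Granting these, for the switching times $t_i<t_j$ of the statement the hypothesis $\sigma(t_i)=\sigma(t_j)=\sigma_p$ gives $S_{\sigma_p}(\mu(t_i))=E(t_i)$ and $S_{\sigma_p}(\mu(t_j))=E(t_j)$, so summing the finitely many in-mode estimates on $(t_i,t_j)$ and adding the nonpositive jumps yields $S_{\sigma_p}(\mu(t_j))-S_{\sigma_p}(\mu(t_i))=E(t_j)-E(t_i)\le\int_{t_i}^{t_j}u_s^\top y_s\,dt$, which is \eqref{Hybrid_passivity}.

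To get (a) I would work on an interval where $\sigma\equiv\sigma_q$: there $\tau_{\mu_i}\dot\mu_i=g_i(\tilde u)$ for $i\notin\sigma_q$, so differentiating along the flow gives $\tau_{\mu_i}\ddot\mu_i=\nabla_{\tilde u}g_i(\tilde u)^\top\dot{\tilde u}$ and hence $\dot E=\sum_{i\notin\sigma_q}\dot\mu_i\,\nabla_{\tilde u}g_i(\tilde u)^\top\dot{\tilde u}$. Since $\tilde y=\sum_i\mu_i\nabla_{\tilde u}g_i(\tilde u)$, we have $y_s=\dot{\tilde y}=\sum_i\bigl(\dot\mu_i\nabla_{\tilde u}g_i(\tilde u)+\mu_i\nabla^2_{\tilde u}g_i(\tilde u)\,\dot{\tilde u}\bigr)$, and because $\dot\mu_i=\mu_i=0$ for $i\in\sigma_q$, pairing with $u_s=\dot{\tilde u}$ gives $u_s^\top y_s=\dot E+\dot{\tilde u}^\top\!\bigl(\sum_{i\notin\sigma_q}\mu_i\nabla^2_{\tilde u}g_i(\tilde u)\bigr)\dot{\tilde u}$. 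The positive projection keeps $\mu_i\ge 0$ for all time and each $g_i$ is convex, so the quadratic form is nonnegative and $\dot E\le u_s^\top y_s$ (with equality if the $g_i$ are affine, as in the case study).

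For (b) I would analyse a switch at time $\tau$, say from $\sigma_q$ to $\sigma_r$. Here $\mu$ is continuous and only the velocities change, so $E(\tau^-)=\tfrac12\sum_{i\notin\sigma_q}g_i(\tilde u(\tau))^2/\tau_{\mu_i}$ and likewise with $\sigma_r$ for $E(\tau^+)$, whence $E(\tau^+)-E(\tau^-)=\tfrac12\sum_{i\in\sigma_q\setminus\sigma_r}g_i(\tilde u(\tau))^2/\tau_{\mu_i}-\tfrac12\sum_{i\in\sigma_r\setminus\sigma_q}g_i(\tilde u(\tau))^2/\tau_{\mu_i}$. By \eqref{sigma_map} an index can \emph{enter} the active set only with $g_i(\tilde u(\tau))\le 0$, so its term carries a minus sign, and it can \emph{leave} the active set only as $g_i(\tilde u)$ crosses zero, i.e. with $g_i(\tilde u(\tau))=0$, so its term vanishes; hence $E(\tau^+)\le E(\tau^-)$.

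\emph{Where the difficulty lies.} Step (b) is the delicate one: it asks for the \emph{direction} of the jump of the discontinuous vector field \eqref{IED}, which must be extracted from the structure of the positive projection — activation of a constraint annihilates a velocity whose sign and magnitude are controlled by $g_i(\tilde u)\le 0$, while deactivation occurs exactly at $g_i(\tilde u)=0$ and is therefore continuous. The accompanying nuisance is the one-sided-limit bookkeeping used to identify $S_{\sigma_p}(\mu(t_i)),S_{\sigma_p}(\mu(t_j))$ with $E(t_i),E(t_j)$ at switching instants, and the implicit assumption (as in \cite{HybridPassive}) of no Zeno behaviour so that $(t_i,t_j)$ contains only finitely many switches; both are routine once a left/right convention for $\sigma$ is fixed.
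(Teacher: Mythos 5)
Your proposal is correct and follows essentially the same route as the paper: the in-mode computation $\dot E=\sum_{i\notin\sigma}\tau_{\mu_i}\dot\mu_i\ddot\mu_i\leq \dot{\tilde u}^\top\dot{\tilde y}$ via convexity of $g_i$ and $\mu_i\geq 0$ is the paper's \eqref{Ssigmadot}, and your jump analysis (strict drop when a constraint activates with $g_i(\tilde u)<0$, continuity when it deactivates through $g_i(\tilde u)=0$) reproduces the paper's cases (i)--(ii), after which both arguments chain the finitely many switches between $t_i$ and $t_j$. Your packaging of the multiple storage functions into the single function $E(t)=S_{\sigma(t)}(\mu(t))$, with an explicit formula for the jump in terms of $g_i(\tilde u(\tau))^2/\tau_{\mu_i}$, is only a cosmetic tightening of the same argument.
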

	\begin{proposition}\label{prop::ass_stab_ineq}
		The equilibrium set $\Omega_e$ defined by constant control input $\tilde{u}=\tilde{u}^\ast$ of \eqref{IED}
		\beqn\label{IE_equilibrium}
		\Omega_e=\left\{(\bar{\mu},\tilde{u}^\ast)\left|g_{i}(\tilde{u}^{*})\leq 0,\;\; \bar{\mu}_{i}g_{i}(\tilde{u}^{*})=0 \hspace{0.2cm} \forall i \in \{1,\hdots, p\}\right.\right\}
		\eeqn
		is asymptotically stable.
	\end{proposition}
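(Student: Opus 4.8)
The plan is to exhibit a Lyapunov function for the switched dynamics \eqref{active_const_def} and then apply a LaSalle-type invariance argument. Fix the constant input $\tilde u = \tilde u^\ast$ and abbreviate $c_i := g_i(\tilde u^\ast)$. For $\Omega_e$ to be nonempty one must have $c_i \le 0$ for every $i$, and since $\tilde u$ is frozen the dynamics \eqref{IEdyn} decouple into $p$ independent scalar projected flows $\tau_{\mu_i}\dot\mu_i = (c_i)^+_{\mu_i}$ on the positively invariant set $\{\mu_i \ge 0\}$. Given an initial condition, let $\bar\mu$ be the point obtained by setting $\bar\mu_i = 0$ for the coordinates with $c_i < 0$ and $\bar\mu_i = \mu_i(0)$ for those with $c_i = 0$; then $(\bar\mu,\tilde u^\ast) \in \Omega_e$ and the complementarity $\bar\mu_i c_i = 0$ holds. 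I would use the quadratic candidate
\[
V(\mu) = \frac{1}{2}\sum_{i=1}^{p}\tau_{\mu_i}(\mu_i - \bar\mu_i)^2 ,
\]
which is continuous, nonnegative, independent of the active set $\sigma$, and positive definite relative to $\Omega_e$ in the relevant coordinates, because the only directions along which a point can leave $\bar\mu$ while staying in $\Omega_e$ are the stationary ones (those with $c_i = 0$).

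Differentiating $V$ along \eqref{active_const_def} gives $\dot V = \sum_i (\mu_i - \bar\mu_i)\,\tau_{\mu_i}\dot\mu_i = \sum_{i \notin \sigma(t)} (\mu_i - \bar\mu_i)\,c_i$, since the right-hand side vanishes for $i \in \sigma(t)$. For $i \notin \sigma(t)$ the definition \eqref{sigma_map} of $\sigma$ together with $c_i \le 0$ forces $\mu_i > 0$, and complementarity gives $\bar\mu_i = 0$ whenever $c_i < 0$; hence each summand equals $\mu_i c_i \le 0$ and
\[
\dot V = \sum_{i \notin \sigma(t)} \mu_i c_i \le 0 ,
\]
with equality exactly when $c_i = 0$ for every $i \notin \sigma(t)$. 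Thus $V$ is nonincreasing along every solution regardless of the (state-dependent) switching, which yields Lyapunov stability of $\Omega_e$; this is the $\dot{\tilde u} = 0$ specialisation complementing the velocity-based multiple-storage-function passivity of Proposition \ref{prop:ineq_passivity}.

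For attractivity I would invoke a LaSalle-type invariance principle for Caratheodory/hybrid systems (in the spirit of \cite{lygeros2003dynamical,cherukuri2016asymptotic}): since $V$ is nonincreasing and coercive in the non-stationary coordinates, solutions are bounded and converge to the largest weakly invariant set contained in $\{\dot V = 0\}$. On that set, for each $i$ one has either $i \in \sigma(t)$, so $\mu_i = 0$, or $c_i = 0$; in either case $\mu_i c_i = 0$, while the projection keeps the flow in $\{\mu_i \ge 0\}$ and $c_i \le 0$ throughout. Hence this invariant set coincides with the slice of $\Omega_e$ at $\tilde u^\ast$, so every solution converges to $\Omega_e$; combined with the stability above, $\Omega_e$ is asymptotically stable. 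One can even sharpen this: each coordinate with $c_i < 0$ reaches $\mu_i = 0$ in finite time and remains there, so convergence to $\Omega_e$ is in finite time.

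The step I expect to be the main obstacle is the rigorous use of the invariance principle for a differential equation with discontinuous right-hand side: one must first establish well-posedness of the projected/switched system — existence and forward uniqueness of Caratheodory solutions, so that $\sigma(t)$ from \eqref{sigma_map} is well defined along the trajectory and the solution is absolutely continuous, and absence of Zeno behaviour — and then verify the hypotheses of the hybrid LaSalle theorem. A secondary subtlety is that $V$ is referenced to a single point rather than to the (in general non-singleton) set $\Omega_e$; this is what forces the coordinate-wise decoupling argument, which shows that the stationary coordinates ($c_i = 0$) are already in $\Omega_e$ and that on the remaining coordinates $\bar\mu_i = 0$ is the unique equilibrium, so that set-stability reduces to point-stability.
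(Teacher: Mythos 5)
Your proof is correct, but it takes a genuinely different route from the paper. The paper's proof works with the velocity-based multiple storage functions $S_{\sigma}$ of \eqref{storage_fun_ineq_const}: it argues that for constant $\tilde{u}=\tilde{u}^\ast$ these are non-increasing both along the flow and across switches (reusing the case analysis of Proposition \ref{prop:ineq_passivity}), concludes Lyapunov stability from that, and then applies the hybrid LaSalle theorem, characterizing the invariant set as the trajectories along which no storage-function jump occurs, which yields exactly the complementarity conditions defining $\Omega_e$; the quadratic form $\frac{1}{2}(\mu-\bar{\mu})^\top\tau_{\mu}(\mu-\bar{\mu})$ appears only as an auxiliary function to establish boundedness of trajectories (your $\dot V\le 0$ computation coincides with that step). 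You instead discard the Krasovskii-type storage functions altogether, exploit the fact that with frozen $\tilde{u}^\ast$ the dynamics \eqref{IED} decouple into $p$ scalar projected flows, anchor the quadratic function at the trajectory-dependent limit point $\bar{\mu}$, and obtain monotonicity of $V$ and, in fact, finite-time convergence of every coordinate with $g_i(\tilde{u}^\ast)<0$; this makes your LaSalle invocation (and your worry about well-posedness and Zeno) essentially superfluous, since the explicit solution shows each coordinate switches at most once. What your argument buys is a more elementary and sharper conclusion (finite-time, pointwise convergence, and a clean reduction of set-stability to point-stability via the decoupling); what the paper's argument buys is coherence with its passivity framework: the same multiple storage functions $S_{\sigma}$ certify both the passivity of the switched block and its stability under constant input, and this is the structure reused in Proposition \ref{interconnectpassive} and illustrated in the simulations, where the closed-loop storage function is shown to drop at the switching instants.
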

	\subsection{ The overall optimization problem:}\label{sec::overall_opt}
	The most interesting  property of passive systems is their modular nature. One can define power conserving interconnections (such as Newton law's or Kirchoff's current/voltage laws) between these systems, and show that the overall system is passive and there by stability.
	In this subsection we define a power conserving interconnection between passive systems associated with optimization problem with an equality constraint \eqref{maindyn} and an inequality constraint \eqref{IED}.
	\begin{proposition} \label{interconnectpassive}
		Consider the interconnection of passive systems \eqref{maindyn} and  \eqref{IED}, via the following interconnection constraints $ u=-\tilde{y}+v~~ \text{and} ~~\tilde{u}=x, ~ v\in \mathbb{R}^p $. 
		The interconnected system is then passive with port variables $\dot{v}$, $-\dot{x}$. 
		Moreover for $v=0$ the interconnected system represents the primal-dual gradient dynamics of the optimization problem
		\begin{equation}\label{SOP_main}
		\begin{aligned}
		& \underset{x \in \mathbb{R}^{n}}{\text{minimize}}
		& & f(x)\\
		& \text{subject to}
		& & h(x)=0 \\
		& & &g_{i}(x)\leq0 \hspace{0.4cm} i = 1,\hdots , p
		\end{aligned}
		\end{equation}
		and the trajectories converge asymptotically to the optimal solution of \eqref{SOP_main}.
	\end{proposition}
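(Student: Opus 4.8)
The statement has three parts, and I would establish them in the order: (i) passivity of the interconnected system with respect to $(\dot v,-\dot x)$, (ii) that the unforced ($v=0$) dynamics are the primal–dual gradient flow of \eqref{SOP_main}, and (iii) asymptotic convergence to the optimizer. For (i), the plan is to add the storage functions supplied by the two earlier propositions. Proposition~\ref{prop::eq_const} gives a Krasovskii-type storage function $S_{1}$ for the equality block \eqref{maindyn} with $\dot S_{1}\le \dot u^{\top}\dot y$ along solutions, and Proposition~\ref{prop:ineq_passivity} gives the family $\{S_{\sigma_{q}}\}_{\sigma_{q}\in\mathcal{P}}$ for the switched block \eqref{IEdyn} obeying the hybrid dissipation inequality \eqref{Hybrid_passivity} with ports $(u_{s},y_{s})=(\dot{\tilde u},\dot{\tilde y})$. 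Substituting the interconnection relations $\tilde u=x$ and $u=-\tilde y+v$ — so that $u_{s}=\dot x$, $\dot u=-y_{s}+\dot v$ and $\dot y=-\dot x$ — into the two supply rates, the terms carrying the internal port variable $y_{s}$ cancel, which is exactly the statement that the interconnection is power conserving, leaving $\frac{d}{dt}(S_{1}+S_{\sigma})\le \dot v^{\top}(-\dot x)$ read in the hybrid sense of \eqref{Hybrid_passivity} between consecutive visits to a common mode $\sigma_{p}$. Since $\mu$ and $\dot\mu$ are continuous across mode changes, $S_{1}+S_{\sigma}$ has no jumps there, so it is a legitimate (multiple) storage function certifying passivity with port variables $(\dot v,-\dot x)$.

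For (ii), set $v=0$ and eliminate $u,\tilde u$: \eqref{maindyn} becomes $-\tau_{x}\dot x=\nabla_{x}f(x)+\sum_{i}\lambda_{i}\nabla_{x}h_{i}(x)-\tilde y$ with $\tilde y=\sum_{i}\mu_{i}\nabla_{x}g_{i}(x)$, together with $\tau_{\lambda}\dot\lambda=h(x)$ and the projected dynamics $\tau_{\mu}\dot\mu=g(x,\sigma)$ of \eqref{IEdyn}, and one recognises these as the continuous-time (projected) primal–dual gradient equations for the Lagrangian $f(x)+\lambda^{\top}h(x)+\mu^{\top}g(x)$ of \eqref{SOP_main}. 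Hence the equilibria are precisely the KKT triples of \eqref{SOP_main}: combining stationarity and primal feasibility with the complementarity description of the equilibrium set $\Omega_{e}$ of Proposition~\ref{prop::ass_stab_ineq} gives $h(x^{\ast})=0$, $g_{i}(x^{\ast})\le 0$, $\mu_{i}^{\ast}\ge 0$, $\mu_{i}^{\ast}g_{i}(x^{\ast})=0$ and $\nabla f(x^{\ast})+\sum\lambda_{i}^{\ast}\nabla h_{i}(x^{\ast})+\sum\mu_{i}^{\ast}\nabla g_{i}(x^{\ast})=0$. Under the standing assumptions (strong duality, strict convexity of $f$, positive-definite Hessian) such an $x^{\ast}$ exists and is the unique optimal solution of \eqref{SOP_main}.

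For (iii), with $v=0$ part (i) shows $S:=S_{1}+S_{\sigma}$ is non-increasing along the hybrid trajectory, the residual dissipation within each mode being $-\dot x^{\top}\nabla_{x}^{2}f(x)\,\dot x\le 0$, strict unless $\dot x=0$; since $S\ge 0$ it converges. I would then invoke a hybrid/switched LaSalle-type invariance principle (as in \cite{lygeros2003dynamical}, or the Caratheodory-invariance argument of \cite{cherukuri2016asymptotic}) to conclude that every bounded solution approaches the largest weakly invariant set contained in $\{\dot x\equiv 0\}$; on that set, boundedness of $\lambda,\mu$ together with $\tau_{\lambda}\dot\lambda=h(x)$ and $\tau_{\mu}\dot\mu=g(x,\sigma)$ forces $h(x)=0$, $\dot\lambda\equiv 0$ and $\dot\mu\equiv 0$, so the set consists of equilibria, i.e. of KKT points of \eqref{SOP_main}, whence $x(t)\to x^{\ast}$ by part (ii). Boundedness and completeness of solutions, needed before invariance can be applied, I would obtain either from Propositions~\ref{prop::eq_const} and \ref{prop::ass_stab_ineq} (each block has bounded, convergent trajectories) or from a radially unbounded state-level certificate built from $\tilde P$ as in \eqref{common_storage_fun}.

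The main obstacle is the switched/discontinuous nature of the inequality block \eqref{IEdyn}: making the "sum of storage functions" argument rigorous across switching instants — so that $S_{1}+S_{\sigma}$ genuinely behaves as a common Lyapunov function in the hybrid sense of \eqref{Hybrid_passivity}, with no storage created at mode changes — and then deploying an invariance principle valid for such systems, together with the passage from "velocities vanish on the invariant set" to "the state is the optimizer." The purely algebraic verification in part (i) that the power-conserving interconnection cancels the internal ports is, by comparison, routine.
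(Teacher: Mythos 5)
Your proposal follows essentially the same route as the paper: sum the Krasovskii storage $\tilde P$ of \eqref{maindyn} and the switched storage $S_{\sigma}$ of \eqref{IEdyn}, use the power-conserving interconnection to obtain $\dot{\tilde S}_{\sigma}\leq -\dot v^{\top}\dot x$, identify the $v=0$ closed loop with the primal--dual dynamics of \eqref{SOP_main}, and conclude asymptotic convergence from the earlier propositions together with a (hybrid) invariance argument, which you merely spell out in more detail than the paper does. One caveat: your justification that ``$\mu$ and $\dot\mu$ are continuous across mode changes, so $S_{1}+S_{\sigma}$ has no jumps'' is false --- when a projection becomes active, $\dot\mu_i$ jumps from $g_i(\tilde u)/\tau_{\mu_i}<0$ to $0$ and the storage drops discontinuously (case (i) in the proof of Proposition \ref{prop:ineq_passivity}, visible in Fig.~\ref{CLS_plot}) --- but since these jumps are downward and the hybrid inequality \eqref{Hybrid_passivity} you invoke already accounts for them, the argument itself is unaffected.
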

	%

	\section{Building Energy Management Formulation}\label{BEMF}
	This section describes the mathematical formulation of the energy management problem of building HVAC systems. The problem is formulated by taking into account the interaction between the multiple consumers and a single producer in achieving social welfare. 
	The rising opportunities for demand side flexibility enables the consumers to manage their load to reduce their costs, in this context, we model the coalition by group of consumers in order to have access to wholesale energy markets. The coalition coordinator or energy provider purchases the electricity from wholesale energy markets and resells to each member of the coalition using a simple price structure. In this paper we consider a time-of-use (TOU) pricing. A schematic representation of the interaction between coalition coordinator and group of consumers is shown in Fig. \ref{coal}. Furthermore, each member of the coalition tries to maximize his own benefit by contributing to overall demand reduction. In order to illustrate the energy management problem, we consider a simulated medium sized commercial building with different zones. The zone thermal dynamics is one of the essential component of the modeling building energy systems. 
	\begin{figure}
		\center
		\includegraphics[scale=0.45]{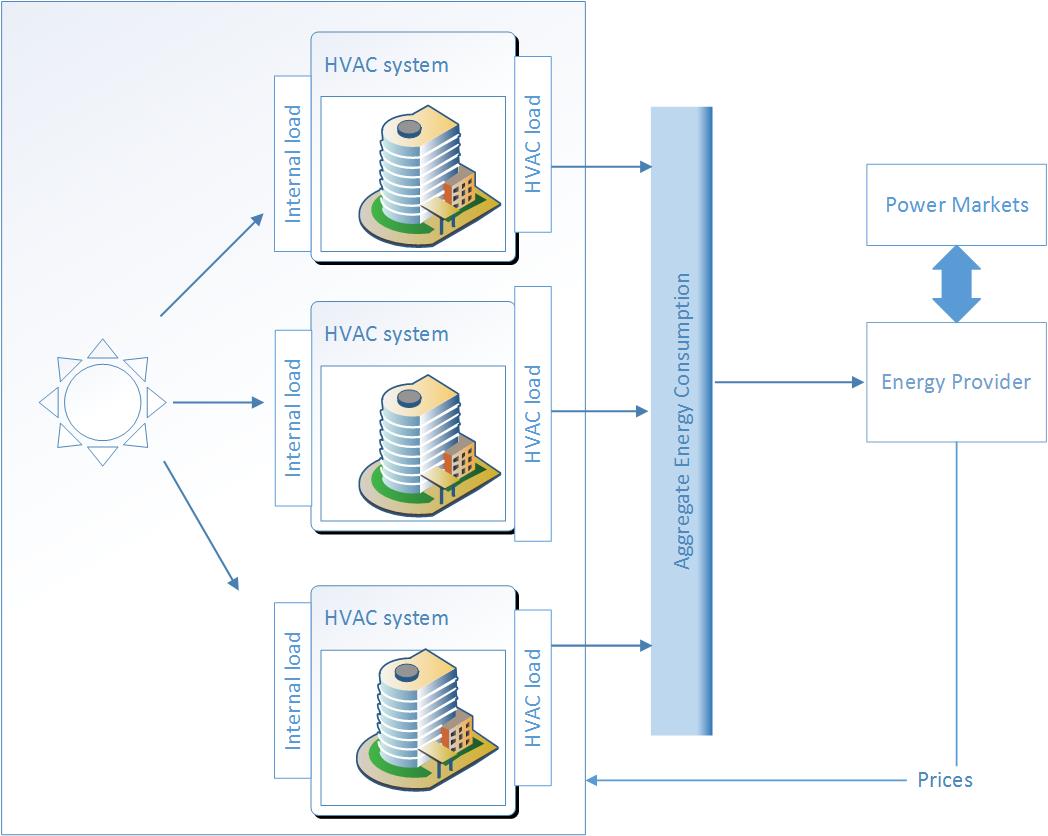}
		\caption{Coalition model of producer-consumer interaction}
		\label{coal}
	\end{figure}
	\subsection{Thermal dynamics of building}
	The thermal dynamics of a multi-zone building can be represented using a resistance-capacitance network and the governing dynamics are given by \cite{chinde2016building}:
	\begin{equation}\label{actualdyn}
	C_{i}\dot{T_{i}}=\sum_{j \in \mathbb{N}_{i}} \frac{(T_{j}-T_{i})}{R_{ij}}
	+\frac{(T_{\infty}-T_{i})}{R_{i0}}+
	u_{i}+d_{i}
	\end{equation}
	where $\mathbb{N}_{i}$ denotes all resistors connected to the $i^{\text{th}}$ capacitor (includes zone and surface capacitances),  $T_{i}$ is the temperature of the  $i^{\text{th}}$ zone and, $T_{\infty}$ denotes the ambient temperature. $C_{i}$ is the thermal capacitance of the $i^{\text{th}}$ zone, $R_{ij}$ is the thermal resistance between zone $i$ and zone $j$, $R_{i0}$ is the thermal resistance between zone $i$ and ambient conditions, $u_{i}$ is the heating/cooling  input to the zone $i$ and $d_{i}$ denotes the heat gain due to sources such as solar, occupancy etc.
%
	

	\subsection{Problem formulation}
	{\em The optimization problem:}
	\textcolor{black}{The energy management problem is formulated as a group of consumers that form a coalition and a energy provider or coalition coordinator who purchases electricity from the wholesale markets through contracts}. The objective is to define the total welfare function of the coalition of consumers under the operational and market clearing constraints. 
		\textcolor{black}{The optimization problems for each consumer and producer at each time slot is given as below.
	The optimization problem for each consumer $i$ is given as follows
	\begin{equation*}
	\begin{aligned}
	& \underset{x_{i}}{\text{maximize}}
	& & U_{i}(x_{i})-px_{i} \\
	& \text{subject to}
	& &  x_{i}^{min} \leq x_{i} \leq x_{i}^{max}.
	\end{aligned}
	\end{equation*}
	Each of these consumers has a private utility function $U_{i}(x_{i})$, which represents the utility the consumer derives by consumption of $x_{i}$ units of power. $p$ stands for the market price, $U_{i}(x_{i}): \mathbb{R} \rightarrow \mathbb{R}$ is strictly concave function.
	The objective of energy provider is to maximize his profits and is given by
	\begin{equation*}
	\begin{aligned}
	& \underset{\bar{x}\geq 0}{\text{maximize}}
	& & p\bar{x}-U(\bar{x}) \\
	& \text{subject to}
	& &  \bar{x}=\sum_{i =1}^{N}x_{i}
	\end{aligned}
	\end{equation*}
	where, $\bar{x}$ denotes the total supply available to the consumers, N denotes the number of consumers, $U(\bar x): \mathbb{R} \rightarrow \mathbb{R}$ is strictly convex function .
	Once we have the consumer and producer cost functions,} 
	the social welfare problem is formulated as the net benefits of the consumers and producer \cite{wei2014distributed}, and is given by
	\begin{align*}
	& \underset{x_{i},\bar{x}}{\text{maximize}}
	\sum_{i=1}^{N}U_{i}(x_{i})-U(\bar{x}) \\
	& \text{subject to} \hspace{0.2cm}\bar{x}=\sum_{i =1}^{N}x_{i} \hspace{0.4cm} x_{i}^{min} \leq x_{i} \leq x_{i}^{max}
	\end{align*}
	
	Using the generic formulation discussed above, the energy management of HVAC system is formulated by considering the discomfort and generation costs as consumer and producer utility functions, respectively \cite{ma2016energy}. This can be formulated as
	\begin{equation}\label{BOPT}
	\begin{aligned}
	& \underset{T_{i},q}{\text{minimize}}
	& & -\left( \sum_{i=1}^{N}U_{i}(T_{i})-U(q)\right)\\
	& \text{subject to}
	& &\sum_{i=1}^{N} \theta \left(\sum_{j \in \mathbb{N}_{i}} \frac{(T_{j}-T_{i})}{R_{ij}} +\frac{(T_{\infty}-T_{i})}{R_{i0}}+d_{i}\right) =q\\
	& & & T_{i}^{min} \leq T_{i} \leq T_{i}^{max} \hspace{0.5cm} i \in \{1, \hdots,N\}\\
	\end{aligned}
	\end{equation}
	where, 
	$U_{i}(T_{i})=b_{i}-\gamma_{i}(T_{i}-T_{i}^{ref})^{2}$, 
	$U(q)=\rho_{1}q^{2}+\rho_{2}q+\rho_{3}$ ($\rho_1 >0$) and $\theta$ denotes the conversion factor from energy consumption to energy demand \cite{henze2003predictive}. The coefficients $\gamma_{i}>0$ determines the tradeoff between cost and comfort \cite{hamalainen2000cooperative}.  The steady state dynamics of \eqref{actualdyn} is considered to relate energy supply and demand. In the compact  notation, the Lagrangian is given as
	\begin{eqnarray}\label{Lag}
	\mathcal{L} &=&  U(q)-U(T) + 
	\lambda^{T}(AT+b- q)\nonumber\\
	&&+\mu_{l}^{T}(T^{min}-T)^++\mu_{h}^{T}(T-T^{max})^+
	\end{eqnarray}
	where, 
	$U(T)=\sum_{i=1}^{N}U_{i}(T_{i})$. 
	As discussed in Section \ref{sec::overall_opt}, the primal dual dynamics of \eqref{Lag} is given as
	\begin{eqnarray}
	\tau_{T}\dot{T}&=&\nabla U(T)-A^{T}\lambda  + \mu_{l}-\mu_{h}\nonumber\\
	\tau_{q}\dot{q}&=&-\nabla U(q)+\lambda \nonumber\\
	\tau_{\lambda}\dot{\lambda} &=&AT+b-q\label{3}\\
	\tau_{\mu_{l}}\dot{\mu_{l}}&=& (T^{min}-T)^{+}_{\mu_{l}}\nonumber\\
	\tau_{\mu_{h}}\dot{\mu_{h}}&=& (T-T^{max})^{+}_{\mu_{h}}\nonumber
	\end{eqnarray}
	\begin{proposition}
		The primal-dual dynamics \eqref{3} converges asymptotically to the optimal solution of \eqref{BOPT}.
	\end{proposition}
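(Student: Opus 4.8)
The plan is to derive this statement as a direct corollary of Proposition~\ref{interconnectpassive}, so that no new stability analysis is required. First I would rewrite \eqref{BOPT} in the canonical form \eqref{SOP_main}: collect the decision variables into $z=(T,q)$ and set $f(z)=U(q)-U(T)=\rho_1 q^2+\rho_2 q+\rho_3-\sum_{i=1}^{N}\left(b_i-\gamma_i(T_i-T_i^{ref})^2\right)$, take the (affine) equality constraint $h(z)=AT+b-q$, and the (affine) inequality constraints $g_\ell(z)=T^{min}-T\le 0$ and $g_h(z)=T-T^{max}\le 0$. A one-line computation gives $\nabla^2 f=\mathrm{diag}(2\gamma_1,\dots,2\gamma_N,2\rho_1)\succ 0$, since $\gamma_i>0$ and $\rho_1>0$; hence $f\in C^1$ is strictly convex with positive definite Hessian, $h$ and the $g_i$ are affine hence convex, and since $T_i^{min}<T_i^{max}$ the feasible set is nonempty with a finite optimum (the objective is continuous on the compact box in $T$, with $q$ fixed by the equality constraint). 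Thus the standing hypotheses underlying Propositions~\ref{prop::eq_const}, \ref{prop:ineq_passivity}, \ref{prop::ass_stab_ineq} and \ref{interconnectpassive} are all in force.

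Next I would verify, term by term, that \eqref{3} is exactly the power-conserving interconnection of the Brayton--Moser system \eqref{maindyn} (built from the $f$ and $h$ above, with state $z$) and the inequality switching system \eqref{IEdyn}, under the interconnection law $u=-\tilde y+v$, $\tilde u=z$, with $v=0$. Writing \eqref{maindyn} componentwise, $-\tau_T\dot T=\nabla_T f+A^\top\lambda+u_T=-\nabla U(T)+A^\top\lambda+u_T$, $-\tau_q\dot q=\nabla_q f-\lambda+u_q=\nabla U(q)-\lambda+u_q$, and $\tau_\lambda\dot\lambda=h(z)=AT+b-q$; comparison with \eqref{3} pins down $u_q=0$ and identifies $u_T$ with the inequality-constraint output $\tilde y=\sum_i\mu_i\nabla_{\tilde u}g_i(\tilde u)$ restricted to the $T$-block (note $\nabla_{\tilde u}g_{\ell,i}=-e_i$ and $\nabla_{\tilde u}g_{h,i}=e_i$, both with vanishing $q$-component, so the $q$-equation automatically carries no inequality term). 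The remaining two lines of \eqref{3} are precisely \eqref{IED} with $\tilde u=T$ and the relabelling $\mu=(\mu_\ell,\mu_h)$, $\tau_\mu=\mathrm{diag}(\tau_{\mu_\ell},\tau_{\mu_h})$.

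Once this identification is in place, Proposition~\ref{interconnectpassive} applies directly: with $v=0$ the system \eqref{3} is the primal-dual gradient dynamics of \eqref{BOPT}, its equilibrium set coincides with the KKT points of \eqref{BOPT}, and every trajectory converges asymptotically to the optimal solution; strict convexity of $f$ forces the primal component $(T^\ast,q^\ast)$ of that set to be the unique minimizer. I expect the only delicate step to be the bookkeeping in the second paragraph: keeping the sign conventions consistent between the projection $(\cdot)^+$, the output $\tilde y$ and the interconnection law $u=-\tilde y+v$, and handling the fact that the box constraints act on the subvector $T$ rather than on the full primal vector $z$, so that $\nabla_{\tilde u}g_i$ has vanishing $q$-block. (Alternatively one could bypass Proposition~\ref{interconnectpassive} and argue directly with the sum of the Krasovskii-type Brayton--Moser storage function $\tilde P$ of \eqref{common_storage_fun} and the multiple storage functions $S_{\sigma_q}$ of \eqref{storage_fun_ineq_const} via the hybrid invariance principle, but invoking Proposition~\ref{interconnectpassive} is the shortest route.) None of this introduces mathematical content beyond Propositions~\ref{prop::eq_const}--\ref{interconnectpassive}.
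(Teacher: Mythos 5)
Your proposal is correct and follows essentially the same route as the paper, whose proof is a one-line reduction: since \eqref{BOPT} has a strictly convex cost with positive definite Hessian and affine equality/inequality constraints, the result follows from Propositions \ref{prop::eq_const}--\ref{interconnectpassive}. Your extra bookkeeping (identifying \eqref{3} as the interconnection of \eqref{maindyn} and \eqref{IEdyn} with $\tilde u=T$, $v=0$) just makes explicit what the paper leaves implicit, and the sign-convention care you flag is indeed the only subtlety.
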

	\begin{proof}
		Since the optimization problem \eqref{BOPT} has a strictly convex cost function and affine constraints, the result follows from Propositions \ref{prop::eq_const} - \ref{interconnectpassive}.
	\end{proof}
	\section{Simulation results}\label{sim}
	In this section, a simulated study is conducted using a building model emulating the ERS test-bed \cite{iec}, which represents a small-sized commercial building as shown in Fig. \ref{ERS}. 
		\begin{figure}[h!]
		\center
		\includegraphics[width=0.7\textwidth]{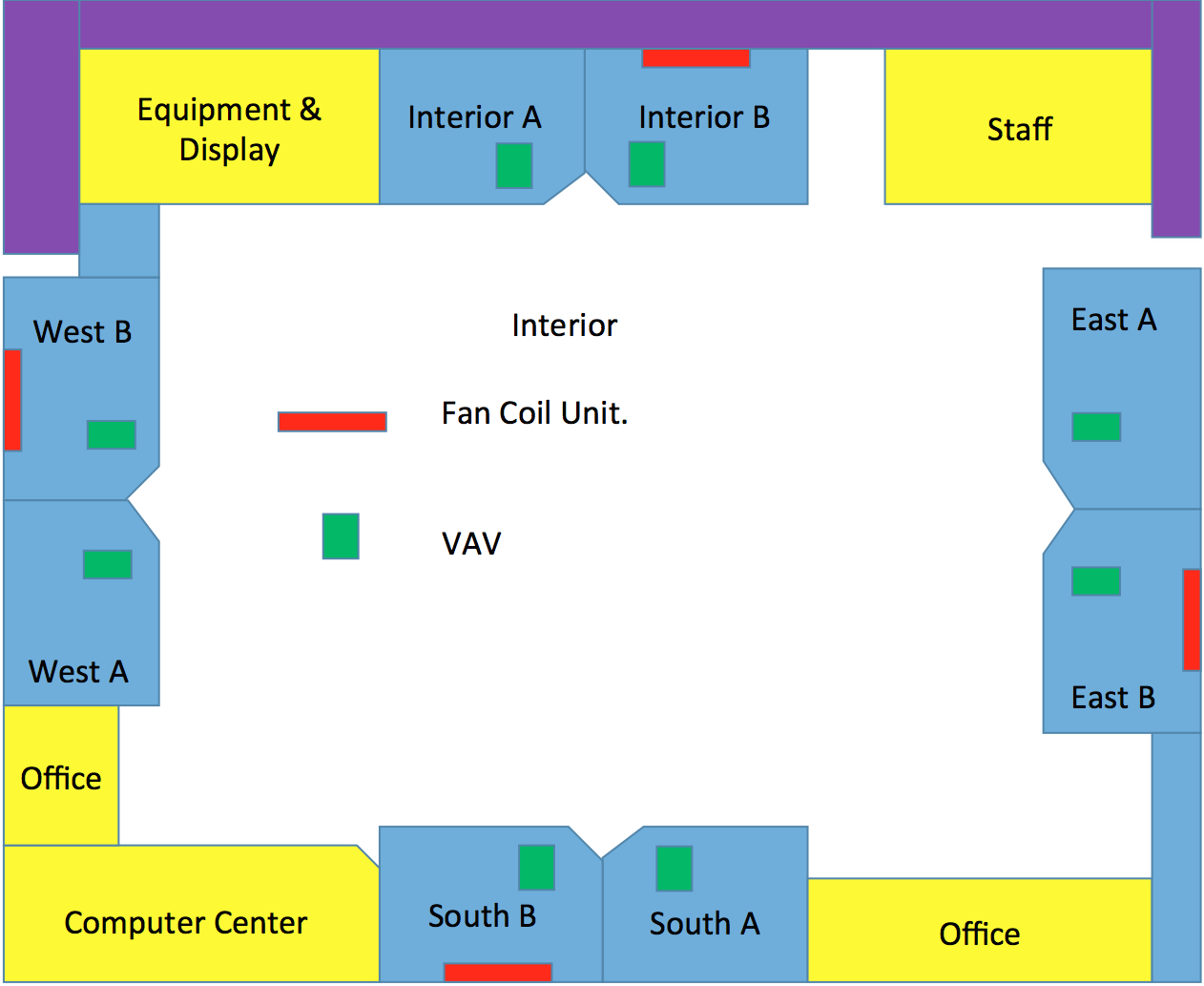}
		\caption{Schematic of the simulated building model}
		\label{ERS}
	\end{figure}
		This simulated model consists of two side-by-side
	independent and similar zones marked as A and B and
	distributed in four directions, East, South, West, and North,
	respectively. These zones are served using two air handling
	units (AHU) marked A and B, where each AHU will be serving
	four Variable Air Volume systems (AHU A serving 4
	zones (A) in different directions).
	For simulation purpose, we consider four zones marked as A,  distributed in four different directions supplied by a single air handling unit (AHU (A)). \textcolor{black}{The parameters used for the simulation is shown in Table \ref{table:parame}.}
	\begin{table}[ht] 
	\caption{PARAMETER SETTINGS} 
\centering 
\begin{tabular}{c c} 
\hline\hline 
$\displaystyle T_{\infty}$, $T_{min}$, $T_{max}$, $T_{i}^{ref}$ & 30, 18, 24, 20.5 \\ 
Inertial time constants ($\tau_{T}$, $\tau_{q}$, $\tau_{\lambda}$,$\tau_{\mu_{l}}$,$\tau_{\mu_{h}}$) & 1 \\
$\rho_{1}$, $\rho_{2}$, $\rho_{3}$ & 0.5, 0, 0  \\
$b_{i}$, $d_{i}$, $R_{i0}$, $\theta$ & 40, 0.5, 11.5, 3  \\
\hline 
\end{tabular}
\label{table:parame} 
\end{table}
	To illustrate the effect of load reduction during a price surge, we consider a simulated TOU pricing. TOU pricing essentially provides consumers with different rates at different times in a 24 hour period. Fig. \ref{ZT} shows the convergence of the algorithm to its optimal value and also the interplay between supply and demand.

	\begin{remark}
	\textcolor{black}	{	
In this study we have four zones, each zone temperature has an upper and lower bound, giving rise to eight inequality constraints. In Figure \ref{CLS_plot} we have plotted the time evolution of their corresponding Lagrange variables $\mu_i$, $i\in \{1\cdots 8\}$. Let $ \{t_1\leq\cdots\leq t_8$\} denote the ordered sequence of time instances where the Lagrange variables $\mu_i$'s converge to zero.}
		\begin{figure}[h!]
		\center
		\includegraphics[width=0.8\textwidth]{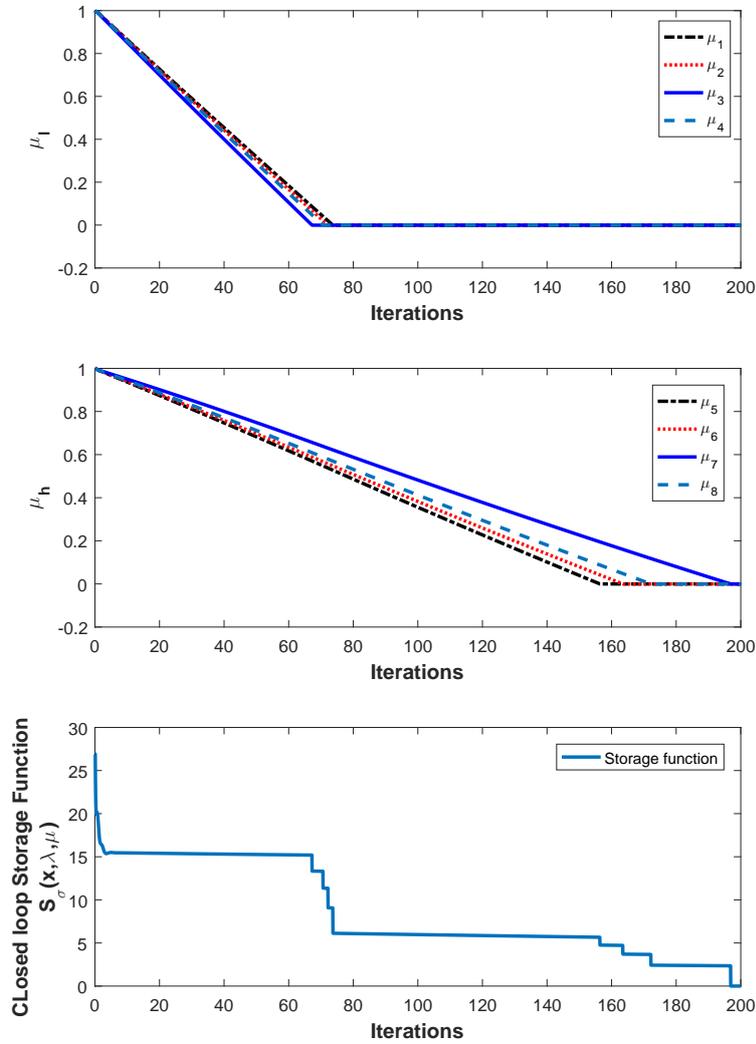}
				\caption{ 	The case study in Section \ref{sim} have four zones, each zone temperature has an upper and lower bound, giving rise to eight inequality constraints. When an inequality constraint is feasible  (i.e. $g_i(T)\leq 0$ see Fig. \ref{ZT}) and its corresponding Lagrange variable ($\mu_i$) converge to zero,  then the closed loop storage function switches to a new storage function that is strictly less than the current one, causing a discontinuity. }
		\vspace{-1em}
		\label{CLS_plot}
	\end{figure}
	\textcolor{black}	{	
The resulting change in the active sets (switching modes) is captured by the switching signal $\sigma(t)$. In the current scenario, the switching signal $\sigma(t)$ have eight different switching modes, and a storage function is defined for each one (refer to Table \ref{table:switching_seq}). Figure \ref{CLS_plot} shows that the closed loop storage function decreases, discontinuously. This discontinuity appears because, at the end of each switching mode, we are switching to a new storage function that is strictly less than the current one. This is coherent with the Proposition \ref{prop:ineq_passivity}, where passivity property is defined with ‘multiple storage functions’.}
	
	\begin{table}[ht] 
\caption{Switching sequence $\sigma(t)$ and the corresponding storage function $S_{\sigma(t)}$} 
\centering 
\begin{tabular}{c c c} 
\hline\hline 
$t$ & $\sigma(t)$ & $S_{\sigma(t)}=\dfrac{1}{2}\sum_{i\notin\sigma(t)}^{}\dot \mu_i^2\tau_{\mu}$\\ 
\hline
$[0, t_1)$   & $\phi$ &$\dfrac{1}{2}\tau_{\mu}\left(\dot{\mu}_1^2+\dot{\mu}_2^2+\dot{\mu}_3^2+\dot{\mu}_4^2+\dot{\mu}_5^2+\dot{\mu}_6^2+\dot{\mu}_7^2+\dot{\mu}_8^2\right)$\\
$[t_1, t_2)$ & $\{3\}$&$\dfrac{1}{2}\tau_{\mu}\left(\dot{\mu}_1^2+\dot{\mu}_2^2+\dot{\mu}_4^2+\dot{\mu}_5^2+\dot{\mu}_6^2+\dot{\mu}_7^2+\dot{\mu}_8^2\right)$\\
$[t_2,t_3)$ & $\{3,4\}$&$\dfrac{1}{2}\tau_{\mu}\left(\dot{\mu}_1^2+\dot{\mu}_2^2+\dot{\mu}_5^2+\dot{\mu}_6^2+\dot{\mu}_7^2+\dot{\mu}_8^2\right)$\\
$[t_3, t_4)$ & $\{3,4,2\}$&$\dfrac{1}{2}\tau_{\mu}\left(\dot{\mu}_1^2+\dot{\mu}_5^2+\dot{\mu}_6^2+\dot{\mu}_7^2+\dot{\mu}_8^2\right)$\\
$[t_4,t_5)$ & $\{3,4,2,1\}$&$\dfrac{1}{2}\tau_{\mu}\left(\dot{\mu}_5^2+\dot{\mu}_6^2+\dot{\mu}_7^2+\dot{\mu}_8^2\right)$\\
$[t_5,t_6)$ & $\{3,4,2,1,5\}$&$\dfrac{1}{2}\tau_{\mu}\left(\dot{\mu}_6^2+\dot{\mu}_7^2+\dot{\mu}_8^2\right)$\\
$[t_6, t_7)$ & $\{3,4,2,1,5,6\}$&$\dfrac{1}{2}\tau_{\mu}\left(\dot{\mu}_7^2+\dot{\mu}_8^2\right)$\\
$[t_7, t_8)$ & $\{3,4,2,1,5,6,8\}$&$\dfrac{1}{2}\tau_{\mu}\dot{\mu}_7^2$\\
$[t_8, \infty)$ &$\{3,4,2,1,5,6,8,7\}$&0\\
\hline 
\end{tabular}
\label{table:switching_seq} 
\end{table} 

\end{remark}


	\begin{figure}[h!] 
	\center
		\includegraphics[width=.85\textwidth]{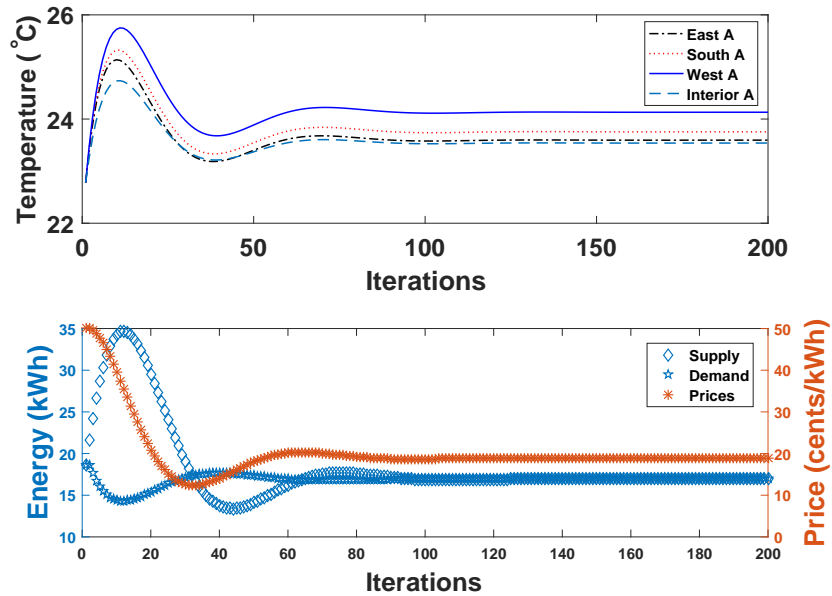}
		\caption{Zone temperature, Supply ($q$)-demand and pricing profiles }
		\label{ZT}
	\end{figure}
	
	In order to evaluate the proposed algorithm for the 24 hour period, we consider the internal load profile as shown in Fig. \ref{IL} which is the sum of heat gains due to occupancy and solar radiation. The occupancy load is computed based on the simulated test bed requirements based on \cite{iec} using the fraction of total occupancy profile. Similarly the solar load is calculated based on the global horizontal irradiance data collected from \cite{wilcox2008users}.
	The outside air temperature profile \cite{wilcox2008users} for summer is considered as shown in Fig. \ref{TOUZT}. The temperature profile for a particular zone (East A) is shown in Fig. \ref{TOUZT} to illustrate the zone behavior to the TOU pricing for a hot summer day. It can be seen that during the time when prices are high the zone temperatures vary while contributing to the overall demand reduction.
	\begin{figure}[h!]
	\center
		\includegraphics[width=12cm,height=6cm]{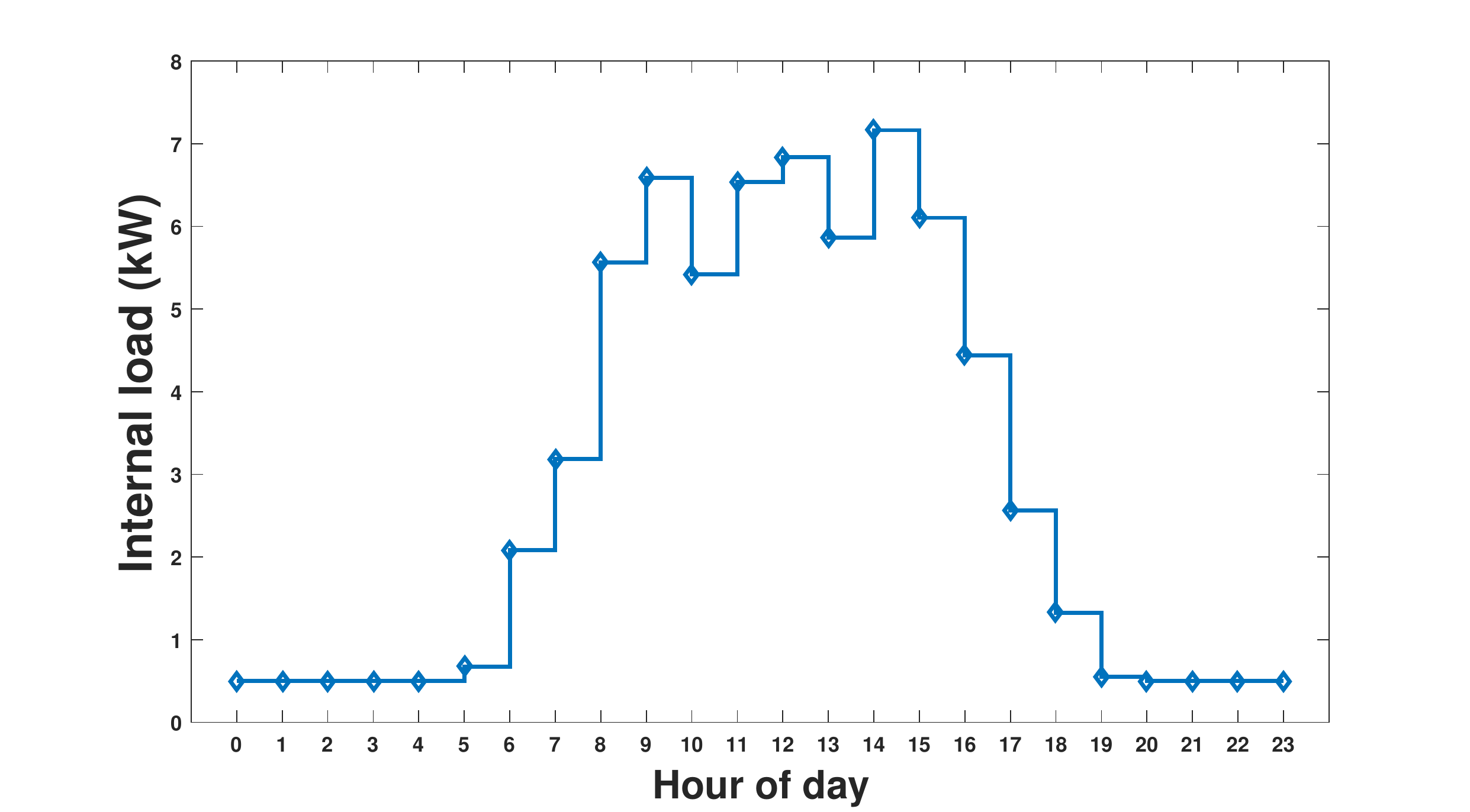}
		\caption{Internal load}
		\label{IL}
	\end{figure}
	\begin{figure}[h!]
		\center
		\includegraphics[width=.9\textwidth]{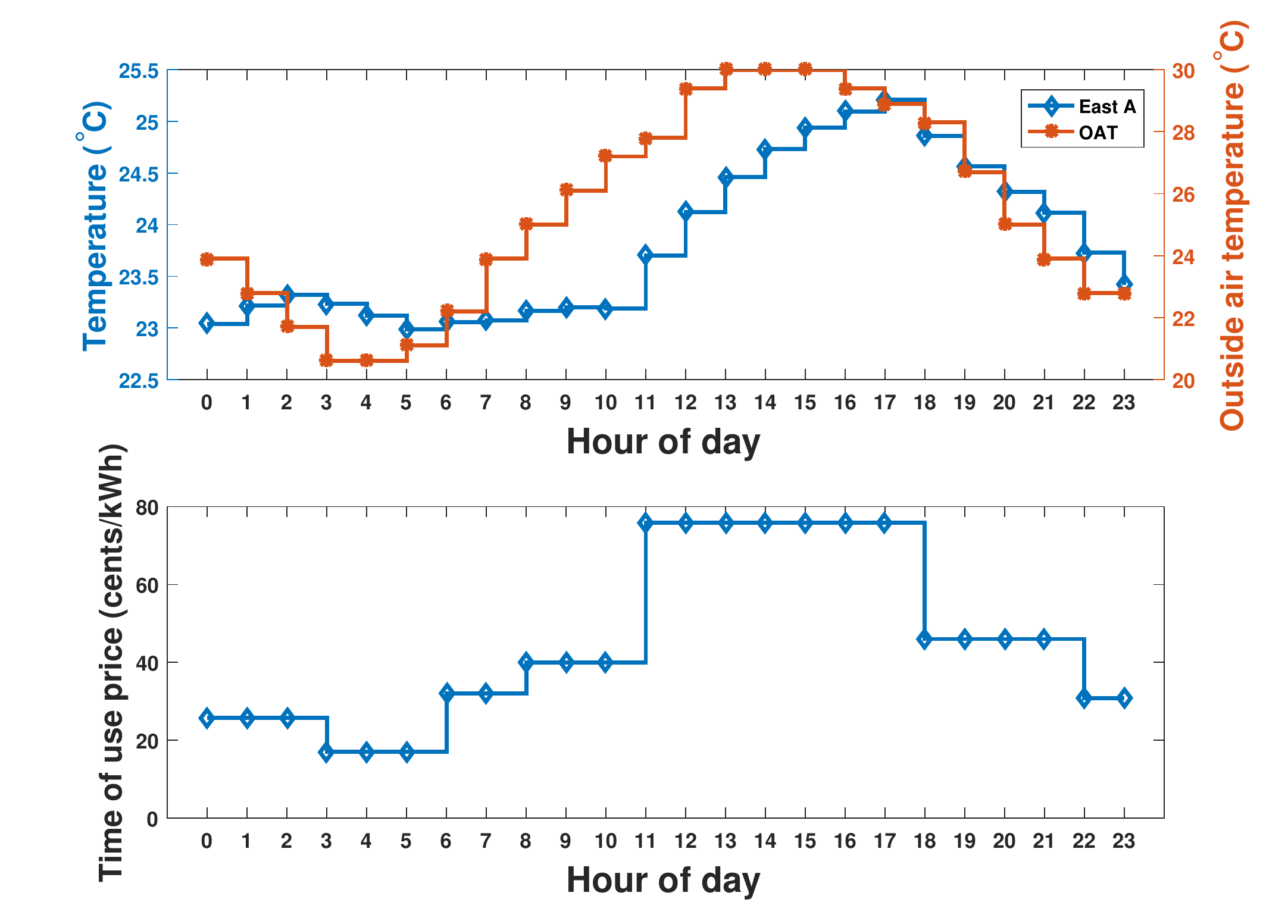}
		\caption{Time of use prices and zone temperatures}
		\label{TOUZT}
	\end{figure}
		\begin{figure}[h!] 
	\center
		\includegraphics[width=.9\textwidth]{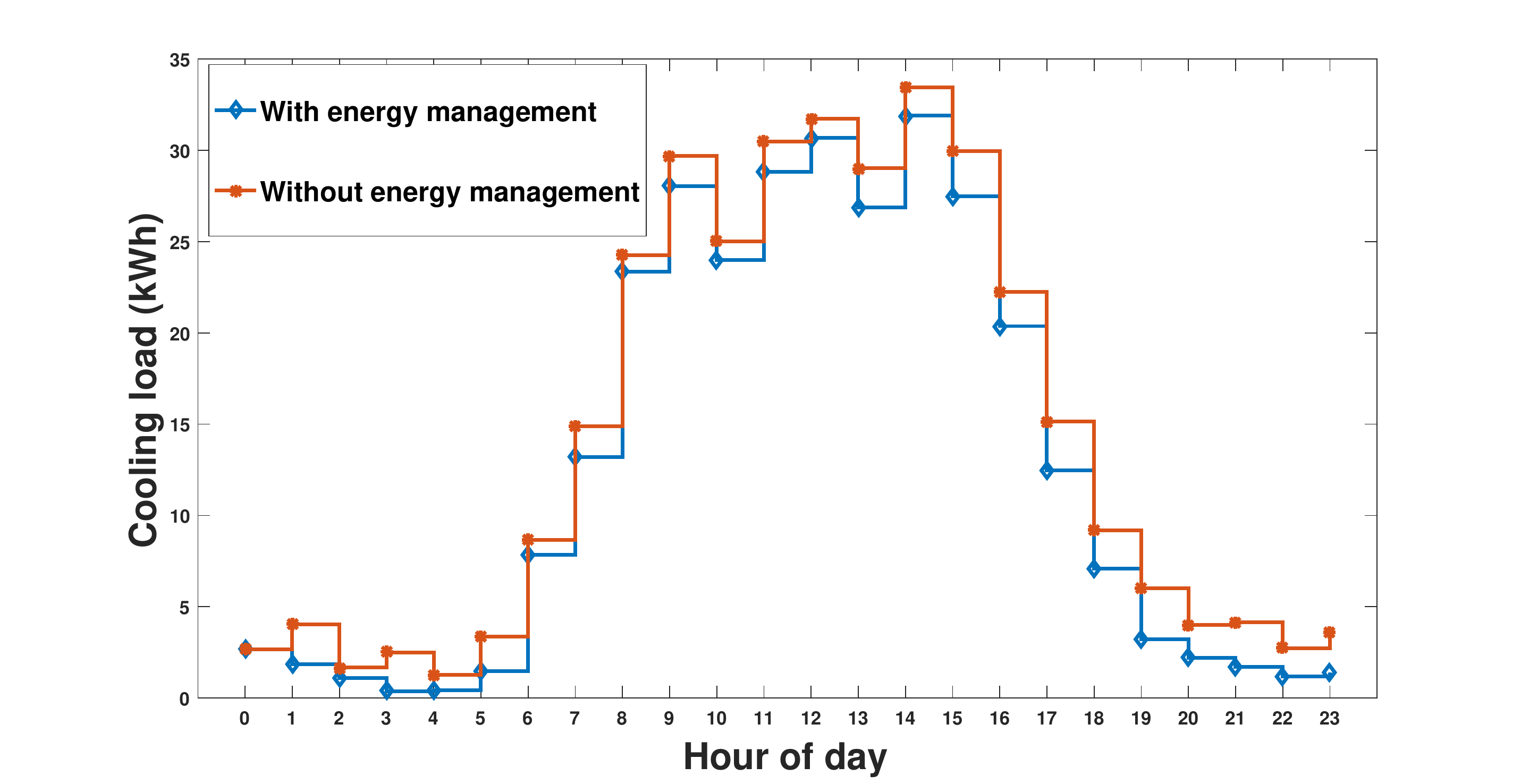}
		\caption{Cooling load}
		\label{CL}
	\end{figure}
	As a result there is a reduction in cooling load of the building as shown in Fig. \ref{CL} in comparison to the building when there is no energy management. Hence, the proposed algorithm effectively reduces the peak load, resulting in overall cost reduction.

	\section{Conclusions}\label{concl}
	Starting from an optimization problem with equality constraint we have shown that their primal-dual equations have a naturally existing Brayton Moser representation. 
	Using the interconnection properties of BM systems we extended the optimization problem to include inequality constraints. The overall convergence is guaranteed by proving the asymptotic stability of individual subsystems, whose Lyapunov functions derived from BM formulation have their roots in Krasovskii method.
	This approach is supported by energy management problem in buildings to reduce the overall demand by varying the zone temperature values during the high prices. This approach further lends itself to include the distributed energy resources such as photo-voltaic systems, etc., as well as battery energy storage into the buildings to find the optimal decisions to benefit both consumers and producers.

	\begin{center}
			\section*{APPENDIX} \label{appendix}
	\end{center} 
\subsection*{A.  Proof of Proposition \ref{prop::eq_const}:}
	In BM formulation we represent the system dynamics in pseudo-gradient form, ($Q(z)$ and $P(z)$ are indefinite). Therefore $P(z)$ can not be used as a Lyapunov function for stability analysis. A way of constructing a suitable Lyapunov function involves finding $\alpha\in \mathbb{R} $ and $M\in \mathbb{R}^{n\times n}$ \cite{ortega2003power,ICCvdotidot} such that 
	\begin{eqnarray}\label{common_storage_fun}
	\tilde{P}=\alpha P+\frac{1}{2}\nabla_xP^\top M \nabla_xP.
	\end{eqnarray}
	Considering $\tilde{P}$ \eqref{common_storage_fun} with $\alpha =0$ and $M =\frac{1}{2} diag\{\tau_{x}^{-1},\tau_{\lambda}^{-1}\}$ we have 
	\begin{eqnarray}\label{eq_const_P}
	\tilde{P}&=&\frac{1}{2}\dot{z}^{T}Q^{T}MQ\dot{z} = \frac{1}{2}\dot{x}^{T}\tau_{x}\dot{x}+\frac{1}{2}\dot{\lambda}^{T}\tau_{\lambda}\dot{\lambda}
	\end{eqnarray}
	The time derivative of the storage function \eqref{eq_const_P} along the system of equations \eqref{maindyn} can be computed as
	\beqn
	\dot{\tilde{P}}&=&-\dot{x}^\top\nabla_x^2f(x) \dot{x}-\dot{x}^\top  \dot{u}
	\leq  -\dot{x}^\top  \dot{u}= \dot u ^{\top}\dot y
	\eeqn
	which implies that the system \eqref{maindyn} is passive. 
	Further for $u=0$ we have $\dot{\tilde{P}}=0$ $\implies$ $\dot{x}=0$ ( $x$ is some constant). Using this in the first equation of \eqref{maindyn} we get that $\lambda$ is a constant, proving asymptotic stability of $\bar{z}$.\\

	\subsection*{B. Proof of Proposition \ref{prop:ineq_passivity}} 
	We start with analyzing the passivity property for a time interval say $[0\; \tau_{\sigma})$ with fixed $\sigma(t)$. The time derivative of the storage function $S_{\sigma}(\mu)$ is 
	\begin{eqnarray}
	\label{Ssigmadot}
	\begin{aligned}
	\dot{S}_{\sigma}
	&=\sum_{i\notin\sigma}^{}\dot \mu_i\ddot \mu_i\tau_{\mu_i}=\sum_{i\notin\sigma}^{}\dot \mu_i\nabla_{\tilde u}g_i^\top\dot{\tilde u}\\
	&= \dot{\tilde{u}}^\top\left(\dfrac{d}{dt}\sum_{i\notin \sigma}\mu_i\nabla_{\tilde{u}}g_i -\sum_{i\notin \sigma}\mu_i\nabla_{\tilde{u}}^2g_i \dot{\tilde{u}}\right)\nonumber\\
	&= \dot{\tilde{u}}^\top\left(\dot{\tilde{y}} -\sum_{\forall i}\mu_i\nabla_{\tilde{u}}^2g_i \dot{\tilde{u}}\right)\\
	&\leq\dot{\tilde{u}}^\top \dot{\tilde{y}}=u_s^\top y_s.
	\end{aligned}
	\end{eqnarray}
	In step two we use $ \sum_{i\notin\sigma}^{}\mu_i\nabla_{u}g_i=\sum_{\forall i}^{}\mu_i\nabla_{u}g_i$ (which is true since $\mu_i=0$, if $ i\in \sigma $) and in step three we use the convexity of $g$ and non-negativity of the $\mu_i$. The above inequality can be equivalently written as
	\begin{eqnarray}\label{passive_IE}
	S_{\sigma}(\mu(\tau_{\sigma}))-S_{\sigma}(\mu(0))\leq \int_0^{\tau_\sigma}\dot{\tilde{u}}^\top \dot{\tilde{y}} dt
	\end{eqnarray}
	Hence, the system of equation \eqref{IEdyn} represent a finite family of passive systems and \eqref{storage_fun_ineq_const} represents their corresponding storage functions. 
	Since this is not sufficient to prove the passivity property of \eqref{IEdyn}, we further need to analyse the behaviour of the storage functions at all switching times. Let $\sigma(t) \in \mathcal{P}$ denotes current active projection set as defined in \eqref{sigma_map}, then we have the following scenarios: \\
	\begin{itemize}
		\item[(i)]	 For some $i\notin \sigma(t^-)$, let the projection of $i^{th}$ constraint ($g_i(\tilde{u})\leq 0$) becomes active (i.e $ \mu_i$ reaches $0$ when $g_i(\tilde{u})<0$) at time $t$. This implies a new element $i$ is added to the projection set, $i\in\sigma(t)$. 
		The term in the storage function corresponding to this $i$ will not appear in \eqref{storage_fun_ineq_const} as $i\in \sigma(t)$. This happens discontinuously because $g_i(\tilde{u},\sigma)$ switches from $g_i(\tilde{u})< 0$ to $0$. Hence
		\beq\label{inactive2active_constraint} S_{\sigma(t)}(\mu(t))< S_{\sigma(t^-)}(\mu(t^-)) \eeq
		\item[(ii)]  In the case when the projection of an active constraint $i \in \sigma(t^-)$ becomes inactive i.e $i \notin \sigma(t)$, a new term $\tau_{\mu_i}\dot{\mu}_i^2$ is added to the summation of the storage function \eqref{storage_fun_ineq_const}. But this happens in a continuous way because $g_i(\tilde{u},\sigma)$ has to increase from $g_i(\tilde{u})< 0$ to $g_i(\tilde{u})> 0$ by crossing $0$. By continuity argument we have
		\beq\label{active2inactive_constraint} S_{\sigma(t)}(\mu(t))= S_{\sigma(t^-)}(\mu(t^-)) \eeq
	\end{itemize}
	This situation are depicted in Fig. \ref{CLS_plot} and \ref{SFV}.
	\begin{figure}[h!]
		\center
		\includegraphics[width=0.9\textwidth]{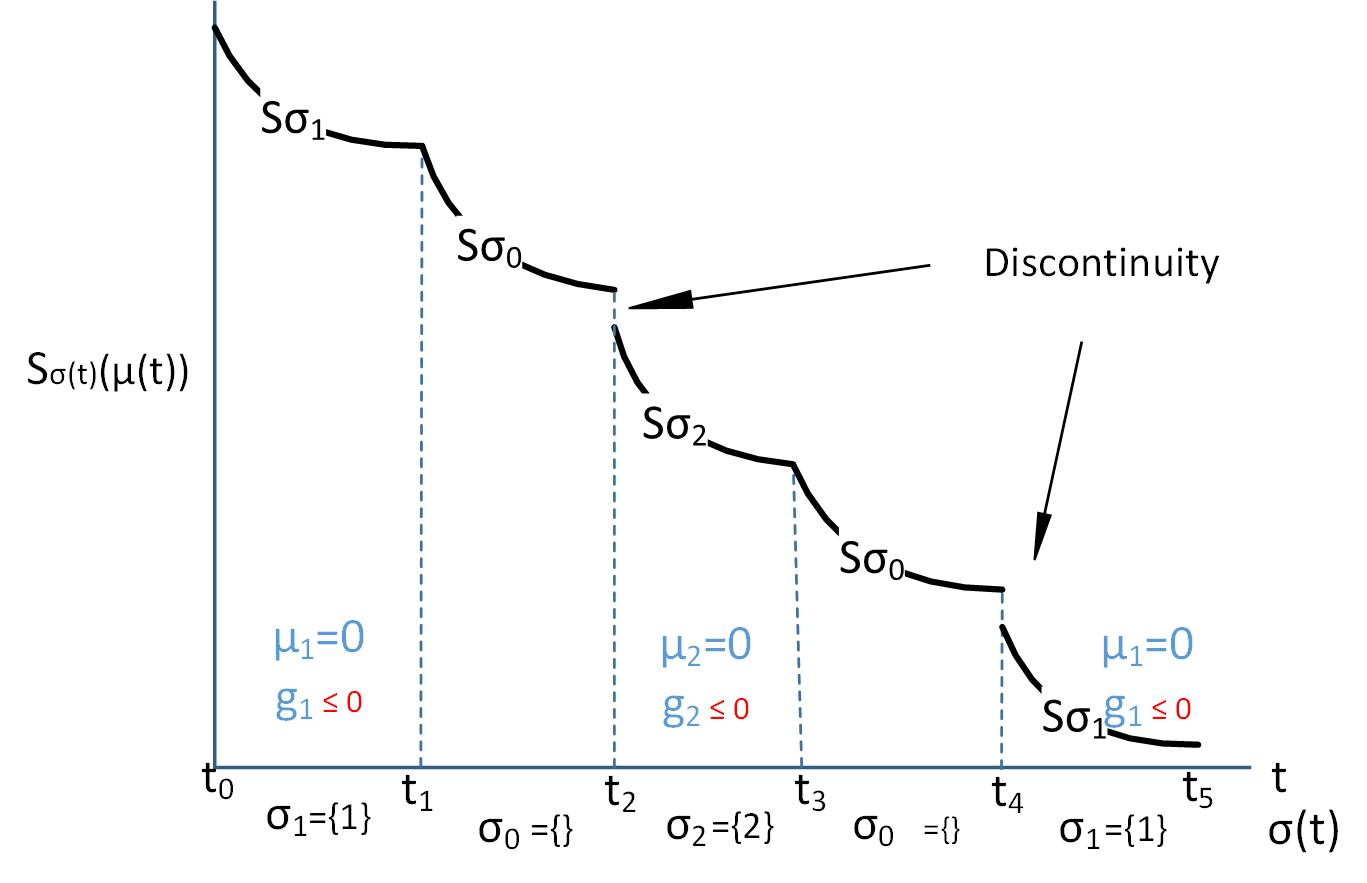}
		\caption{Example for time evolution of storage function with  two inequality constraints ($p=2$). Note that case (i) appears at switching time $t_2$, $t_4$ and case (ii) at $t_1$, $t_3$.}
		\label{SFV}
	\end{figure}
	Now consider a $\sigma_p\in \mathcal{P}$ as given in the proposition.
	Now consider a $\sigma_p\in \mathcal{P}$ with the property that for every pair of switching times $(t_i,t_j)$, $i<j$ such that $\sigma(t_i)=\sigma(t_j)=\sigma_p\in \mathcal{P}$ and $\sigma(t_k)\neq \sigma_p$ for $t_i<t_k<t_j$. 
	We assume that there are $N$ switching times between $t_i$ and $t_j$. 
	Noting that the storage function is not increasing at switching times we have, 
	\begin{eqnarray*}
		S_{\sigma(t_j)} &\leq& S_{\sigma(t_j^-)}
		\leq  S_{\sigma(t_{i+N})} +\int_{t_{i+N}}^{t_j}\dot{\tilde{u}}^\top \dot{\tilde{y}} dt\\
		&\leq & S_{\sigma(t_{i+N}^-)} +\int_{t_{i+N}}^{t_j}\dot{\tilde{u}}^\top \dot{\tilde{y}} dt\\
		&\leq & S_{\sigma(t_{i+N-1})} +\int_{t_{i+N-1}}^{t_{i+N}}\dot{\tilde{u}}^\top \dot{\tilde{y}} dt +\int_{t_{i+N}}^{t_j}\dot{\tilde{u}}^\top \dot{\tilde{y}} dt\\
		&\leq & S_{\sigma(t_{i})} +\int_{t_{i}}^{t_{i+1}}\dot{\tilde{u}}^\top \dot{\tilde{y}} dt +\cdots+\int_{t_{i+N}}^{t_j}\dot{\tilde{u}}^\top \dot{\tilde{y}} dt\\
		&= & S_{\sigma(t_{i})} +\int_{t_{i}}^{t_j}\dot{\tilde{u}}^\top \dot{\tilde{y}} dt
	\end{eqnarray*}
	Above we used \eqref{passive_IE}, \eqref{inactive2active_constraint} and \eqref{active2inactive_constraint}. 
	We thus conclude the system is passive with port variables $(\dot{\tilde{u}}, \dot{\tilde{y}})$. \\
	\subsection*{C. Proof of Proposition \ref{prop::ass_stab_ineq}} 
	From \eqref{Ssigmadot}, \eqref{inactive2active_constraint} and \eqref{active2inactive_constraint} in Proposition \ref{prop:ineq_passivity}, we can infer that the Lyapunov function \eqref{storage_fun_ineq_const} is non-increasing for a constant $\tilde{u}=\tilde{u}^\ast$, concluding Lyapunov stability.  Now we use hybrid Lasalle's theorem condition \cite{lygeros2003dynamical} to show that $\Omega_e$ is the maximal positively invariant set, defined by 
	\begin{itemize}
		\item[(i)] $\dot{S}_{\sigma}(\mu(t))=0$ for fixed $\sigma$. This is can be verified by substituting $\tilde{u}=\tilde{u}^\ast$ a constant  in \eqref{Ssigmadot}.
		\item[(ii)] 	( $S_{\sigma(t^-)}(\mu(t^-))=S_{\sigma(t)}(\mu(t))$ if $\sigma$ switches between $\sigma(t^-)$ to $\sigma(t)$ at time $t$. In \eqref{IED}, if $g_i(\tilde{u}^\ast)<0$ and the corresponding $\mu_i^\ast>0$ then $\mu_i$ linearly converges to zero, causing a discontinuity in the Lyapunov function $S_{\sigma}(\mu(t))$ ( case-i of Proposition \ref{prop:ineq_passivity}).
		 This does not happen if either
		\beq\label{equild_cond12}
		g_i(\tilde{u}^\ast)<0 ~\text{and} ~ \mu_i^\ast =0~\text{or}~
		g_i(\tilde{u}^\ast)=0 ~\text{and}~  \mu_i^\ast\geq 0
		\eeq
		$g_i(\tilde{u}^\ast)<0$ and $\mu_i^\ast =0$ or $g_i(\tilde{u}^\ast)=0$, $\mu_i^\ast\geq 0$
		because both conditions imply $\dot{\mu}_i=0$. 
	\end{itemize}
	
	Consider the quadratic norm $V(\mu)=\frac{1}{2}(\mu-\bar{\mu})^\top \tau_{\mu}(\mu-\bar{\mu})$. Next, using (8), (9) and (17) together with $g^+_i(\tilde{u})_{\mu_i}\leq g_i(\tilde{u})$, we show that the $V(\mu)$ is non-increasing 
	\begin{eqnarray*}
		\dot{V}
		&=&(\mu-\bar{\mu})^\top g^+(\tilde{u}^\ast)_{\mu}
		\leq(\mu-\bar{\mu})^\top g(\tilde{u}^\ast)\\
		&=&\sum_{\forall i\notin \sigma(t)}^{}(\mu_i-\bar{\mu}_i)^\top g_i(\tilde{u}^\ast)+\sum_{\forall i\in \sigma(t)}(\mu_i-\bar{\mu}_i)^\top g_i(\tilde{u}^\ast)\\
		&=&\sum_{\forall i\notin \sigma(t)}(\mu_i-\bar{\mu}_i)^\top g_i(\tilde{u}^\ast)\\
		&=&\sum_{\forall i\notin \sigma(t)}\mu_i^\top g_i(\tilde{u}^\ast)\leq 0
	\end{eqnarray*}
	This implies that the trajectories of \eqref{IED} are bounded for $\tilde{u}=\tilde{u}^\ast$.
	If $g_i(\tilde{u}^\ast)>0$, 
	$\mu_i$ increases linearly, contradicting the boundedness of the trajectories. The proof follows by noting that conditions in \eqref{equild_cond12} represent $\Omega_e$ set.
	\subsection*{D. Proof of Proposition \ref{interconnectpassive}} Define the storage function $\tilde{S}_{\sigma}(x,\lambda,\mu)=\tilde{P}(x,\lambda)+S_{\sigma}(\mu)$.
	The time differential of $\tilde{S}_{\sigma}(x,\lambda,\mu)$ is
	\begin{eqnarray*}
		\dot{\tilde{S}}_{\sigma}(x,\lambda,\mu)
		&=&-\dot{u}^\top \dot{x}+\dot{\tilde{u}}^\top\dot{\tilde{y}}\leq -\dot{v}^\top \dot{x}
	\end{eqnarray*}
	The interconnection of  \eqref{maindyn} and \eqref{IED}, with $v=0$, gives
	\begin{eqnarray}\label{primal-dual-dyn}
	-\tau_{x}\dot{x}&=&\left(\nabla_{x} f(x)+\sum_{i=1}^{m}\lambda_{i}\nabla_{x} h_{i}(x)+\sum_{i=1}^{p}\mu_{i}\nabla_{x} g_{i}(x)\right)\nonumber\\
	\tau_{\lambda_{i}}\dot{\lambda_{i}}&=& h_{i}(x)\nonumber\\
	\tau_{\mu}\dot{\mu_{i}}&=& \begin{cases}
	g_{i}(x) \;\;\;\;\;\;\;\;\;\;\;\;\;\;\text{if}\; \mu_{i}>0 \;\; \forall i \in \{1,\hdots, p\} \label{PriD}\\
	\text{max}(0,g_{i}(x))\;\; \text{if} \;\mu_{i}=0
	\end{cases}
	\end{eqnarray}
	which represent the primal-dual gradient dynamics of \eqref{SOP}.
	Hence the overall system take the form of primal-dual gradient dynamics representing optimization problem with both equality and in-equality constraint \eqref{SOP_main}. \\
	When $v=0, ~ \dot{\tilde{S}}_{\sigma}(x,\lambda,\mu) \le 0$, for the interconnected system. Stability can thus be concluded using the relation between passivity and stability \cite{l2gain} and Propositions \ref{prop:ineq_passivity}, \ref{prop::ass_stab_ineq}. 
	\bibliographystyle{IEEEtran}
	\bibliography{refs}
\end{document}